\newtheorem{thm}{Theorem}[section]
\newtheorem{lem}[thm]{Lemma}
\theoremstyle{remark}
\newtheorem{remark}{Remark}[section]
\newtheorem{example}{Example}[section]
\numberwithin{equation}{section}
\numberwithin{figure}{section}
\newcommand{\thmref}[1]{Theorem~\ref{#1}}
\newcommand{\bg}{\begin}
\def\jp12{{j+1/2}}
\def\jm12{{j-1/2}}
\def\R {\mathbb{R}}
\def \e {\epsilon}
\def \RR{{\rm I\kern -1.6pt{\rm R}}}
\newcommand{\ds}{\displaystyle}
\newcommand{\sign}{\mathop{\rm sign}\nolimits}
\newcommand{\drm}{\,{\rm d}}
\newcommand*\circled[1]{\tikz[baseline=(char.base)]{
            \node[shape=circle,draw,inner sep=1pt] (char) {#1};}} 
\tikzset{
  on each segment/.style={
    decorate,
    decoration={
      show path construction,
      moveto code={},
      lineto code={
        \path [#1]
        (\tikzinputsegmentfirst) -- (\tikzinputsegmentlast);
      },
      curveto code={
        \path [#1] (\tikzinputsegmentfirst)
        .. controls
        (\tikzinputsegmentsupporta) and (\tikzinputsegmentsupportb)
        ..
        (\tikzinputsegmentlast);
      },
      closepath code={
        \path [#1]
        (\tikzinputsegmentfirst) -- (\tikzinputsegmentlast);
      },
    },
  },
  mid arrow/.style={postaction={decorate,decoration={
        markings,
        mark=at position .6 with {\arrow[#1]{stealth}}
      }}},
}
\begin{document}
\title{A model for traffic flow encoding hysteresis}

\author{Andrea Corli}
\address{Department of Mathematics and Computer Science, University of Ferrara\\
44121 Ferrara, Italy}
\email{andrea.corli@unife.it}

\author{Haitao Fan}
\address{Department of Mathematics,
Georgetown University\\
Washington, DC 20057}
\email{fan@math.georgetown.edu}
\date{\today}

\bg{abstract}
A simple macroscopic model for the vehicular traffic flow with hysteresis is proposed.
The model includes drivers' hysteresis behavior into the classical Lighthill-Whitham-Richard (LWR) model.
One novelty of the model is how the hysteresis is modeled based on hysteresis behavior reported in literature.
By comparing wave patterns producible by the classical LWR model versus this model, the effect of hysteresis can be revealed.
For this purpose, wave pattern analysis is carried out by construction of viscous waves and Riemann solvers of the model.
Examples of solutions for initial data corresponding to some real world setups or experimental observations are constructed.
Many of them qualitatively
agree with experimental observations and intuition.
These result shows that the model not only keeps the successful part of the LWR model,
but also demonstrates many wave patterns observed real world traffic that LWR fails to exhibit,
such as stop-and-go waves and many other traffic patterns. A mechanism for formation and decay of stop-and-go patterns and traffic jams is revealed.
Furthermore, acceleration shocks in congested zone are predicted, which can serve as fronts of stop-and-go patterns. 
\bigskip

\noindent
{\sc Keywords:}
Hysteresis, traffic flow, stop-and-go, phantom jam, traveling wave, acceleration shock, Riemann solver.

\smallskip
\noindent
{\sc AMS Subject Classification:} 35L65, 35C07, 90B20.
\end{abstract}

\maketitle

\baselineskip 18pt

\section{Introduction}
The simplest equation providing a macroscopic description of a traffic flow is the famous Lighthill-Whitham-Richards equation \cite{Lighthill-Whitham, Richards}
\begin{equation}\label{e:LWR}
\rho_t+\left(\rho v\right)_x=0,
\end{equation}
where $\rho\in[0,\rho_{\rm max}]$ is the normalized car density and the velocity $v$ is assumed to be a given function of the density $\rho$. Usually $v$ is a decreasing function and $v(\rho_{\rm max})=0$. It is clear that the complexity of traffic flows cannot be fully described by \eqref{e:LWR}; we refer to \cite{CLS, Helbing} for two dated survey articles, which however still provide a lot of insights to this modeling. The agreement and disagreement between LWR model and real-world traffic observations are summarized in \cite{Nagel-Nelson2005}. Major drawbacks of LWR model are that it disallows both stop-and-go waves and car platoons traveling at uniform speed without uniform spacing, among many other wave patterns usually observed in real traffic.

Macroscopic traffic models of conservative form cannot produce stop-and-go patterns when both front of a stop-and-go wave are sharp. In a recent work \cite{Seibold2013}, the authors constructed jamiton solutions, resembling stop-and-go waves, for Payne-Whitham (PW) model, Aw-Rascle (AR) model \cite{Aw-Rascle} and inhomogeneous Aw-Rascle-Zhang (ARZ) model \cite{Zhang2002}.
However, the fixed length and a smooth acceleration part of a jamiton is contrary to some observations of a stop-and-go structure in real-world traffic, see e.g. Fig. 3 of \cite{Kaufmann_et_al2017}.

In the existing literature, drivers' hysteresis behavior is reported. Hysteresis loops are believed to be related to stop-and-go waves and many other traffic phenomena. But hysteresis traffic is never fully modeled, at least in macroscopic models, and the above belief is not confirmed nor rejected deductively.

In this paper we shall investigate traffic with hysteresis by building hysteresis effect into the LWR model \eqref{e:LWR}.
We shall investigate and list possible wave patterns of this model and compare them with those allowable by the LWR model.
Then the differences are attributable to hysteresis.

The occurrence of different acceleration and deceleration regimes for congested traffic flows was first pointed out in \cite{Newell1962}; more precisely, in the case of congested flow, the deceleration branch was required to lie above the acceleration branch in the density-flow diagram (the so-called {\em fundamental diagram}), as confirmed by empirical data \cite{Edie1965, Foote1965}. A reason for this asymmetry is that drivers keep large spacing in acceleration and short spacing in deceleration. It was later recognized  \cite{Newell1965} that the occurrence of these different regimes is the starting point for an hysteresis-like interpretation, and hysteresis loops were confirmed experimentally \cite{Treiterer-Myers}. Some microscopic models for traffic flows showing hysteretic behaviors were reviewed in \cite{CLS}, and updated experimental evidence was provided there.

To the best of our knowledge, the first paper proposing a macroscopic {\em modeling} of hysteresis in traffic flows is \cite{Zhang1999}. By means of a careful analysis of the experimental data, the author deduces that the acceleration and deceleration trajectories $\left(\rho(x(t)),v(x(t))\right)$ {\em meet}, and possibly more than once; here $x(t)$ is the car path as a function of time. Then, he builds up a complex model with three different phases, where hysteresis in then {\em deduced} as a consequence of phase changes. We refer to Figure \ref{f:8} for an illustration of an hysteresis loop, see \cite[Fig. 3]{Treiterer-Myers} and \cite[Fig. 9]{Zhang1999}. On a similar basis, further considerations on hysteresis loops are contained in \cite{Yeo2008, Yeo-Skabardonis}, where five traffic phases are identified, including a stationary and a coasting phase. We also refer to \cite{Zhang-Kim} for another modeling with a coasting phase.

\begin{figure}[hb]
\begin{tikzpicture}[>=stealth, scale=0.6]


\draw[->] (0,0) -- (12,0) node[below=0.2cm, left] {$\rho$} coordinate (x axis);
\draw[->] (0,0) -- (0,8) node[left] {$v$} coordinate (y axis);

\draw[thick]  [rounded corners=0.3cm ] (10,1) .. controls (2,1.5) and (4,3)  .. node[near start, below, sloped]{acceleration} node[near start, above]{$\ell_+$} (3,7)
-- node[midway, below=0.7cm,right=-0.1cm]{$\ell_-$} (1,7.2) [sharp corners] .. controls (1.3,4) and (6,3) .. (10,1.2) node[near end, above, sloped]{deceleration} .. controls (10.3,1.05) and (10.5,1) .. (10,1);

\draw[very thick,->] (6.05,1.5) -- (6,1.52);
\draw[very thick,->] (6,2.8) -- (6.05,2.78) ;
\draw[very thick,->] (3.3,5.5) -- (3.29,5.55) ;
\draw[very thick,->] (1.58,5.5) -- (1.63,5.45) ;

\end{tikzpicture}
\caption{\label{f:8}{An figure eight-loop as in \cite[Fig. 3]{Treiterer-Myers} and \cite[Fig. 9]{Zhang1999} with a positive subloop $\ell_+$ and a negative subloop $\ell_-$.}}
\end{figure}

Recent analysis of experimental data \cite{Laval2011, Laval-Leclercq, SZHW}, though without proposing a precise model, attribute more importance to the heterogeneity of drivers to justify cluster formations. Here heterogeneity refers to the existence of aggressive and timid drivers. According to this interpretation, two kinds of hysteresis loops have been observed in experimental data. Loops are called {\em positive} if the acceleration branch stays {\em below} the deceleration branch and the loop is traveled clockwise in the fundamental diagram; they are called {\em negative} if the acceleration branch stays {\em above} the deceleration branch and the loop is traveled counterclockwise \cite{AVL, Laval2011}. Notice that Figure \ref{f:8} shows the possibility of both loops. Experimental measurements of hysteresis can be found in \cite{SZHW, Yeo-Skabardonis} and positive loops are observed to be more frequent than negative loops. We also refer to \cite{Deng-Zhang} for a modeling including relaxation and anticipation parameters, through which one can reproduce positive, negative, and double hysteresis loops, as well as aggressive and timid driving behavior.

Other papers dealing with hysteresis in traffic flows are \cite{ABRGF}, where loops are observed from experimental data and a microscopic stochastic model is proposed; \cite{BABW}, where a phase-transition macroscopic model is studied; \cite{Blank2005, Blank2008} for a deterministic lattice model.

In the macroscopic fluid-dynamics models quoted above, stop-and-go wave patterns and related hysteresis-like phenomena observed in real traffic flows are almost never {\em explicitly and rigorously} deduced from the model.
Another observed traffic pattern is the {\em car train}: a sequence of vehicles traveling at the same speed, but with possibly different spacing between pairs of adjacent vehicles. This pattern is not allowed by macroscopic models as \eqref{e:LWR}, where $v=v(\rho)$ is prescribed. On the other hand, microscopic models tracing each individual vehicle are flexible enough to include various drivers' behaviors for each vehicle, and hence can produce many observed wave patterns; however, they are hard to analyze due to the size of the systems.

\smallskip

The main purpose of this paper is to construct a simple model of vehicular traffic flow in one lane, which not only includes the successful part of Lighthill-Whitham-Richards-type models, but can also
show explicitly the relation between hysteresis phenomena and the formation of stop-and-go waves, the existence of stationary cars train with varying spacings, the compaction of car trains due to speed variations, among other things. Our model is somewhat similar to analogous models dealing with hysteresis phenomena in oil recovery \cite{Corli-FanPM, Plohr-Marchesin-Bedrikovetsky-Krause}.

This paper is organized as follows. In \S2, a simple model of vehicular traffic flow in one lane with hysteresis is constructed in Eulerian coordinates.
In \S3, we translate the model into Lagrangian coordinates, which give easy access to drivers' point of view, especially because hysteresis loops are related to vehicle trajectories \cite{Zhang1999}. In the rest of the paper we always use the Lagrangian coordinates.
In \S4, all basic waves in the model, such as shock and rarefaction waves, are found and their existence is proved.
Unusual waves are the stationary shocks (in Lagrangian coordinates).
Such shocks can be argued intuitively later in \S6 as a result of drivers' rational behavior.
 Experimental data fully confirm the occurrence of these waves,
see \cite[page 210]{CLS} and \cite{Yeo2008, Yeo-Skabardonis}.
In \S5, Riemann solvers are constructed. Examples are given to show that Riemann solvers are not unique for some Riemann data.
In \S6, many examples of solutions of the model with various initial values corresponding interesting situations are constructed.
Examples of solutions exhibiting stop-and-go waves and stationary car trains with uneven spacings are presented.
Some examples show the formation and decay of stop-and-go waves.
In particular, one example investigates the formation of stop-and-go patterns on a circular track, echoing experimental observations reported in  \cite{Sugiyama2004}.
Other solutions show several interesting traffic patterns including the compaction of car trains due to speed variations.
We further investigate the effect of nonuniqueness of Riemann solvers.
In particular, one example shows the instability of a uniform car train in the congested zone can arise due to such nonuniqueness. In \S7 we conclude this paper.

\section{The model in Eulerian coordinates}\label{s:Euler}
Two different phases have been observed in one-lane traffic flows \cite{Hall-Allen-Gunter, Kerner}. One phase occurs when the vehicle density $\rho$ is low; the traffic is then said to be in {\em the free zone}. The other phase takes place when the density $\rho$ exceeds some critical value $\rho_c$; then the traffic is in {\em the congested zone}. The density-velocity relations are different in these two zones. Moreover, it has been recognized since \cite{Newell1962} that trajectories corresponding to accelerations and decelerations describe different curves in the fundamental diagram. By a careful analysis of the data in \cite{Treiterer-Myers}, it was deduced \cite{Zhang1999} that in the free zone the acceleration curve lies above the deceleration curve, and conversely in the congested zone, see Figure \ref{fig:vu}. The whole modeling in \cite{Zhang1999} is rigorous and based on few simple and reasonable assumptions.

Based on \cite{Newell1962, Treiterer-Myers, Zhang1999} and recalling \cite{Corli-FanPM} for a similar modeling in a different topic, we describe the behavior of a typical driver as follows. Given the vehicle density $\rho$ in front of the driver, the driver's speed takes values in a range between $v^A(\rho)$ and $v^D(\rho)$, depending on the driver's mode (or mood). The curves
$v=v^A(\rho)$ and $v=v^D(\rho)$ are called the acceleration and the deceleration curves, respectively. To be more specific, let us follow a typical rational driver in {\em congested} zone;
we refer to Figure \ref{f:vv}.

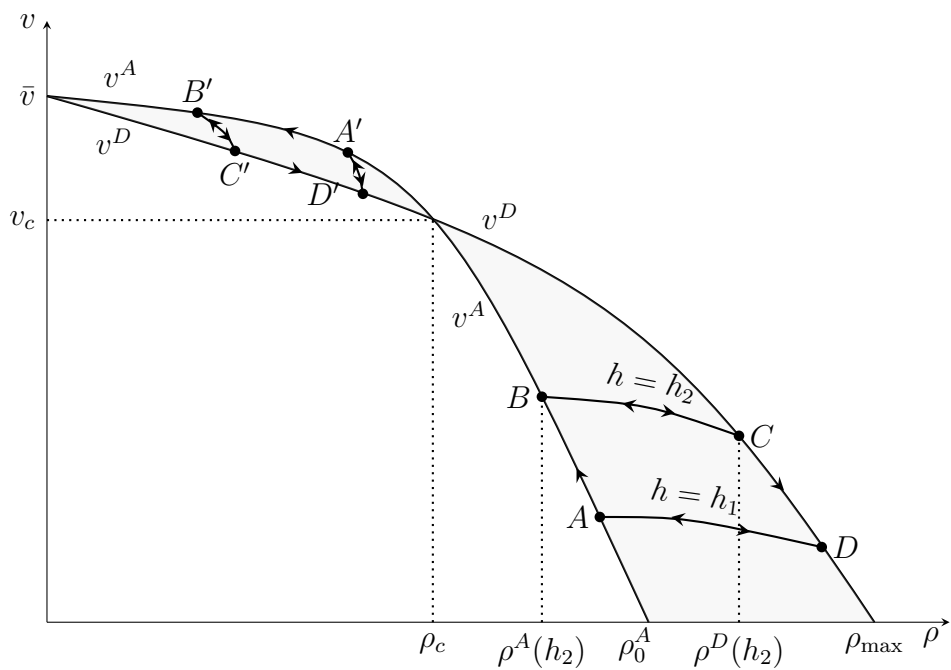
\begin{figure}[htbp]
\begin{tikzpicture}[>=stealth],scale=0.9]


\draw[->] (0,0) -- (12,0) node[below=0.2cm, left] {$\rho$} coordinate (x axis);
\draw[->] (0,0) -- (0,8) node[left] {$v$} coordinate (y axis);

\draw[thick] (0,7) node[below=0.6cm, right=0.5cm]{$v^D$} node[left]{$\bar v$} .. controls (7,5) and (8,4.5) .. (11,0)  node[below]{$\rho_{\rm max}$};
%
\draw[thick] (8,0) node[below=0.3cm, left=-0.2cm]{$\rho_0^A$} .. controls (5,6.5)  and (5,6.5) .. (0,7) node[above=0.3cm, right=0.6cm]{$v^A$};
%
\filldraw[fill=gray!20!white, draw=black, nearly transparent] (0,7) .. controls (7,5) and (8,4.5) ..  (11,0) -- (8,0) .. controls (5,6.5) and (5,6.5) .. (0,7) -- cycle;
%
\draw[dotted, thick] (5.13,0) node[below]{$\rho_c$} -- (5.13,5.4) 
node[below=1.3cm, right=0.1cm]{$v^A$}
node[right=0.5cm]{$v^D$};
\draw[dotted,thick] (0,5.35) node[left]{$v_c$} -- (5.2,5.35);


\fill (7.35,1.4) circle (2pt) node[left]{$A$};
\fill (6.58,3) circle (2pt) node[left]{$B$};
\fill (9.2,2.48) circle (2pt) node[right]{$C$};
\fill (10.3,1) circle (2pt) node[right]{$D$};

\draw[thick, dotted] (6.58,0) node[below]{$\rho^A(h_2)$} -- (6.58,3);
\draw[thick, dotted] (9.2,0) node[below]{$\rho^D(h_2)$} -- (9.2,2.48);

\draw[thick] (6.58,3)  .. controls (8,2.9) .. node[midway,sloped,above] {$h=h_2$} (9.2,2.48);
\draw[thick] (7.35,1.4)  .. controls (8.5,1.4) .. node[midway,sloped,above] {$h=h_1$} (10.3,1);

\draw[very thick,->] (7.065,2) -- (7.04,2.05);
\draw[very thick,->] (8.3,2.79) -- (8.35,2.78);
\draw[very thick,->] (7.7,2.9) -- (7.65,2.91);
\draw[very thick,->] (9.75,1.8) -- (9.8,1.75);
\draw[very thick,->] (9.3,1.21) -- (9.35,1.2);
\draw[very thick,<-] (8.3,1.4) -- (8.35,1.39);

\fill (4,6.25) circle (2pt) node[above]{$A'$};
\fill (2,6.78) circle (2pt) node[above]{$B'$};
\fill (2.5,6.27) circle (2pt) node[below]{$C'$};
\fill (4.2,5.7) circle (2pt) node[left=0.152cm]{$D'$};

\draw[thick] (2,6.78)  .. controls (2.4,6.5) ..  (2.5,6.27);
\draw[thick] (4,6.25)  .. controls (4.15,6) .. (4.2,5.7);

\draw[very thick,->] (3.2,6.55) -- (3.15,6.57);
\draw[very thick,->] (2.25,6.6) -- (2.15,6.7);
\draw[very thick,->] (2.4,6.45) -- (2.45,6.4);
\draw[very thick,->] (3.3,6.02) -- (3.4,5.98);
\draw[very thick,->] (4.12,6.05) -- (4.07,6.15);
\draw[very thick,->] (4.17,5.9) -- (4.2,5.8);

\end{tikzpicture}
\caption{\label{fig:vu}{The functions $v^A(\rho)$ and $v^D(\rho)$ and two scanning curves with $h_1<h_2$. The hysteresis loop in congested zone $ABCDA$ is clockwise, while that in free zone, $A'B'C'D'A'$, is counterclockwise}. Arrows indicate the directions in which  a car's state can move. }
\label{f:vv}
\end{figure}

{\em Acceleration.} Suppose that the driver's state at time $t=t_a$ is at point $A$. If the spacing, defined as $1/\rho$, in front of him/her 
 is increasing as $t$ increases, then the driver will increase his/her speed along the curve $v=v^A(\rho)$. As long as vehicles in front of him/her obey the same law, he/she will drive at that speed too. In fact, traveling faster than $v=v^A(\rho)$ would result in a forced deceleration soon after the initial acceleration that costs more in fuel, and end up at the same speed as the car in front of his/her but with a shorter, and hence less safe and less comfortable, spacing in front. Thus, he/she follows the curve $v=v^A(\rho)$ as he/she accelerates.

{\em Scanning to deceleration.} After some time, at $t = t_b$, the driver's state is at the point $B$. Suppose now the spacing ahead of his/her is decreasing as time increases; this forces him to decelerate. Hoping the space shortening is just temporary, and out of mental inertia as well as being reluctant to loose momentum, he/she will not decelerate along the curve $v=v^A(\rho)$. Rather, he/she will decelerate along a curve with a faster speed for the same $\rho$, shown in Figure \ref{fig:vu} as the curve $BC$. If the spacing improves after the initial deceleration, then he/she can accelerate along the curve $BC$ back to point $B$. Had he/she followed the curve $v=v^A(\rho)$ for deceleration, he/she would go through much harder decelerations and accelerations, costing more in time, fuel and vehicle maintenance.
Borrowing a terminology used for modeling hysteresis in oil recovery \cite{Corli-FanPM, Plohr-Marchesin-Bedrikovetsky-Krause}, we call the $BC$ curve a {\it scanning curve} and  {\it scanning mode} the corresponding mode of the driver.

We assume, for simplicity, that a typical driver in scanning mode accelerates and decelerates along the {\em same} scanning curve.

{\em Deceleration.} Suppose that the spacing in front of the driver keeps shortening; then he/she has to decelerate along the curve $BC$, and eventually hits the point $C$ at time $t=t_c$.
At $C$, the spacing is the tightest possible for the speed. If further deceleration has to be done, he/she must decelerate harder along the deceleration curve $v=v^D(\rho)$ for safety.

{\em Scanning to acceleration.} Assume that later, at $t=t_d$, the driver's state is at point $D$,
and the spacing in front of him is increasing. Then
he/she exits the deceleration mode, gets into scanning mode and accelerates in response. However, with bad memories of traffic jams and being reluctant to invest more fuel in an unsure recovery, he/she drives slower than what $v=v^D(\rho)$ allows, along the scanning curve $DA$, so that he/she can decelerate easily back should the euphoria fades. Next, suppose that the spacing keeps improving to point $A$. Further spacing improvement after that will send him to acceleration mode and further acceleration must follow the acceleration curve $v=v^A(\rho)$.

\smallskip

The loop $ABCDA$ is called a {\em hysteresis loop}; analogous loops occur in the {\em free} zone, see the loop $A'B'C'D'A'$ in Figure \ref{fig:vu}. Notice that loops in congested zone and free zone rotate in opposite directions. It is not clear whether these loops coincide with those described in \cite{AVL, Laval2011, SZHW}.
Their interpretation in \cite{AVL, Laval2011, SZHW} is based on the heterogeneity of the drivers and not on the presence of the acceleration and deceleration curves. It is likely that these loops simply occur in different traffic regimes and that the introduction in our model of a further variable accounting for heterogeneity could possibly explain both kinds of loops. We leave this subject for future investigations.

In summary, the driver's mode can be classified  as follows:
\begin{equation}\label{e:imb-dr-begin}
\begin{array}{rl}
\hbox{ Acceleration mode: } & \rho_t(x,\cdot)<0\text{ and } v=v^A(\rho);
\\[2mm]
\hbox{ Deceleration mode: } & \rho_t(x,\cdot)>0 \text{ and } v=v^D(\rho);
\\[2mm]
\hbox{ Scanning mode: } & v \text{ is between } v^A(\rho)
 \text{ and } v=v^D(\rho), \text{ and }
 \\ & \text{neither in Acceleration nor Deceleration mode.}
\end{array}
\end{equation}

\smallskip

After this illustration of the model, we now provide precise details. We are given two smooth positive functions $v^A(\rho)$ and $v^D(\rho)$, with $v^A(0)=v^D(0)=\bar v$ and $v^A(\rho_0^A)=v^D(\rho_{\rm max})=0$. Here $\bar v>0$ is the maximum speed and $0<\rho_0^A<\rho_{\rm max}$, see \cite{Newell1962, Yeo-Skabardonis, Zhang1999}. According to \cite{Zhang1999}, the smooth curves $v=v^A(\rho)$ and $v=v^D(\rho)$ can intersect at multiple points.
For simplicity, we assume that these curves intersect at only one point $\rho=\rho_c$ and their relative positions are as in Figure \ref{fig:vu}.
The region $\{\rho<\rho_c\}$ is called {\it free zone}, while the other side $\{\rho>\rho_c\}$ is called
{\it congested zone}.

From the above description, we see that there is a scanning curve
through each point on the deceleration curve  $v=v^D(\rho)$. Similarly, there is also a scanning curve through each point on the acceleration curve $v=v^A(\rho)$. We parameterize these scanning curves with a parameter $h$. On each scanning curve the parameter $h$ is a constant, and then a scanning curve can be written as $v=v^S(\rho, h)$. We call $h$ the {\em hysteresis parameter}; it represents the driver's mood as
she drives according to the rule $v=v^S(\rho, h)$. We denote the range of $h$ as $[0, H]$. We note that as long as  scanning curves can be observed, their identifying parameter $h$ can be considered as  observable.

\begin{example}\label{ex_param} An example of parametrization of scanning curves is to assign $h$ to be
the $\rho$-coordinate of the intersection point between the curves $v=v^S(\rho,h)$ and $v=v^D(\rho)$.
In this case the range of $h$ is $[0, \rho_{\max}]$.
\end{example}

To describe the state of the driver at the point $(x, t)$ we use the variables $(\rho, h)(x, t)$. The hysteresis parameter $h(x, t)$ records the hysteresis mode the driver is in; hence, it should follow the driver and be updated as the driver's vehicle moves along the traffic. In other words, $h(x, t)$ records the scanning curve to take should the vehicle at $(x, t)$ switch into scanning mode.

We denote  by $\rho^D(h)$ (or $\rho^A(h)$) the density at the intersection of the curves $v=v^S(\rho, h)$ and $v=v^D(\rho)$ (or $v=v^A(\rho)$), see Figure \ref{fig:vu}.
The inverse functions of $\rho^D$ and $\rho^A$ are denoted as
$h=h^D(\rho)$ and $h=h^A(\rho)$, respectively.
They are the $h$ values of scanning curves intersecting $v=v^D(\rho)$ and $v=v^A(\rho)$
respectively.
For the simple parametrization in Example \ref{ex_param}, we have
$\rho^D(h) = h$, $h^D(\rho) = \rho$.
When $\rho=\rho_c$, the curves $v=v^A(\rho)$ and $v^D(\rho)$ intersect and hence
$h^D(\rho_c)=h^A(\rho_c)=:h_c$. At last,
\begin{align*}
v^A(\rho) &= v^S\left(\rho, h^A(\rho)\right),&
v^A\left(\rho^A(h)\right) &=  v^S\left(\rho^A(h), h\right),
\\
v^D(\rho) &= v^S\left(\rho, h^D(\rho)\right),&
v^D\left(\rho^D(h)\right) &=  v^S\left(\rho^D(h), h\right).
\end{align*}

The speed should be higher when the vehicle density $\rho$ is lower.
This translates to
\begin{equation}\label{Assumption 1.1}
v^A_\rho(\rho)<0, \quad  v^D_\rho(\rho)<0, \quad  v^S_\rho(\rho, h)< 0.
\end{equation}
At the intersection of $v=v^A(\rho)$ and $v=v^S(\rho, h)$, where $\rho= \rho^A(h)$, a vehicle follows the rule $v=v^S(\rho, h)$ when it decelerates from the speed $v=v^A(\rho)=v^S\left(\rho^A(h), h\right)$. Thus, the other side of the same scanning curve should have lower $v$ and higher $\rho$. This means that
\begin{equation}\label{Assumption 1.2}
\rho^A(h)\le\rho^D(h), \quad \text{ where $=$ holds if and only if $h=h_c$.}
\end{equation}
Notice that \eqref{Assumption 1.2} holds in both zones. Then, the physically relevant states lie in
\begin{equation}\label{physRegion}
\left\{ (\rho, h)\in [0,\rho_{\rm max}]\times[0,H]\  :  \rho^A(h) \le \rho \le \rho^D(h)\right\}.
\end{equation}
In the Example \ref{ex_param}, the higher $h$ is, the higher is the scanning curve $v^S(\cdot,h)$. We require that scanning curves do not intersect, and then we assume
\begin{equation}\label{Assumption 1.3}
v^S_h(\rho, h) >0.
\end{equation}
This assumption assures the invertibility of the functions $\rho^A$ and $\rho^D$
and allows us to uniquely define in $[0,\rho_{\rm max}]\times[0,H]$
\begin{equation}\label{generalV}
v = v(\rho, h) := \begin{cases}
v^A(\rho) &\text{ if } \rho \le \rho^A(h),
\\
v^S(\rho, h) &\text{ if } \rho^A(h)< \rho < \rho^D(h),
\\
v^D(\rho) &\text{ if } \rho\ge \rho^D(h).
\end{cases}
\end{equation}
We extended the velocity $v$ to the whole $[0,\rho_{\rm max}]\times[0,H]$ for technical reasons.

\smallskip
\noindent{\bf Assumption (I).} {\em Assume \eqref{Assumption 1.1},  \eqref{Assumption 1.2} and \eqref{Assumption 1.3}.
}

\smallskip

We define the boolean variables $A$, $D$ and $S$ as
\begin{gather*}
A= \left\{\rho = \rho^A(h),\   \rho_t<0\right\}, \qquad D= \left\{\rho = \rho^D(h), \   \rho_t>0\right\},
\\
S = \left\{ \rho^A(h) \le \rho\le  \rho^D(h), \   A={\it false} \hbox{ and } D={\it false}\right\}.
\end{gather*}
The boolean characteristic function is
\begin{equation*}
\chi(E) = \begin{cases}
1 &\text{ if } E \hbox{ is true, } \\
0 &\text{ else.}
\end{cases}
\end{equation*}
We observed that the hysteresis parameter $h$ should travel with the driver. In acceleration (or deceleration) mode, $h$ should be updated as $h^A(\rho)$ (or $h^D(\rho)$) along the vehicle path; in scanning mode, $h$ should not change. Thus, the equation for $h$ is
\begin{equation}\label{eq4h}
h_t + v h_x = \chi( A )\left(h^A(\rho)_t +vh^A(\rho)_x\right)
+ \chi( { D} )\left( h^D(\rho)_t + v h^D(\rho)_x\right).
\end{equation}
By combining  \eqref{e:LWR} and \eqref{eq4h} we obtain the system of equations modeling one-lane traffic flow with hysteresis:
\begin{equation}\label{systemInviscidh}
\left\{
\begin{array}{l}
\rho_t + \left(\rho v\right)_x = 0,
\\[2mm]
h_t + v h_x = \ds\chi( A )\left( h^A(\rho)_t +vh^A(\rho)_x\right)
+ \chi( { D} )\left( h^D(\rho)_t + v h^D(\rho) _x\right).
\end{array}
\right.
\end{equation}
\section{The model in Lagrangian coordinates}\label{s:Lagrange}
Tracing vehicle paths gives the advantage of seeing traffic from  drivers' point of view. Thus, converting \eqref{systemInviscidh} to Lagrangian coordinates offers some advantages, especially in light that the hysteresis parameter is updated along vehicle paths. If $\rho > 0$, Eulerian coordinates and Lagrangian coordinates are equivalent.

When using Lagrangian coordinates, the density $\rho$ is replaced by the spacing $u:=1/\rho$ between vehicles. We assume $\rho_{\rm max}=1$, without any loss of generality, so that $u\in[1,\infty)$. For sake of simplicity, with a slight abuse of notation we still use the same letter $h$ to denote the hysteresis parameter. Under this notation, we rewrite all previous functions $v^A$, $v^D$, $v^S$ into the new variables; we get $v^A(u)$, $v^D(u)$, $v^S(u, h)$, again with a slight abuse of notation. The functions corresponding to $\rho^A(h)$, $\rho^D(h)$, $h^A(\rho)$, $h^D(\rho)$ are transformed to $u^A(h)$, $u^D(h)$, $h^A(u)$, $h^D(u)$. If $h$ it is chosen as the $u$-coordinate of the intersection of the curves $v=v^D(u)$ and $v^S(u,h)$, then $h\in[1,\infty)$; for simplicity we keep this interval as the general range of $h$. We refer to Figure \ref{f:uv}.


\begin{figure}[htbp]
\begin{tikzpicture}[>=stealth]

\draw[->] (0,0) -- (12,0) node[below=0.2cm, left] {$u$} coordinate (x axis);
\draw[->] (0,0) -- (0,8) node[left] {$v$} coordinate (y axis);

\draw (0,7.5) node[left] {$\bar v$} --  (11,7.5);

\draw[thick] (1,0) node[below]{$1$} node[above=0.5cm, left]{$v^D$} .. controls (1.5,3) and (3,5.5) .. (11,6.5) node[right]{$v^D$};

\draw[thick] (11,7.3) node[right]{$v^A$} .. controls (5,7)  and (4,7.1) .. (3.5,0) node[below=0.3cm, left=-0.2cm]{$u_0^A$} node[above=0.5cm, left]{$v^A$};

\filldraw[fill=gray!20!white, draw=black, nearly transparent] (1,0) .. controls (1.5,3) and (3,5.5) ..  (11,6.5) -- (11,7.3) .. controls (5,7) and (4,7.1) .. (3.5,0) -- cycle;

\draw[dotted, thick] (4.22,0) node[below]{$u_c$} -- (4.22,4.65)
;
\draw[dotted,thick] (0,4.65) node[left]{$v_c$} -- (4.22,4.65);

\draw[thick] (1.21,1) .. controls (2,1.6) and (2.5,1.9) .. node[sloped,above=0.3cm, right=-0.5cm] {$v^S(\cdot,h_1)$} node[below=0.5cm]{$\mathcal{C}$}
(3.68,2);
\draw[->] (1.21,-0.6) -- (1.21,-0.1);
\draw[dotted, thick] (1.21,0) node[below=0.5cm]{$u^D(h)$} -- (1.21,1);
\draw[->] (3.68,-0.6) -- (3.68,-0.1);
\draw[dotted, thick] (3.68,0) node[below=0.5cm]{$u^A(h)$} -- (3.68,2);

\draw[thick] (6,5.44) .. controls (6.6,6.4)  .. node[sloped,left=0.9cm, above=0cm] {$v^S(\cdot,h_2)$} (7.5,7.06) node[right=0.8cm, below=0.2cm]{$\mathcal{F}$};
\draw[->] (6.0,-0.6) -- (6.0,-0.1);
\draw[dotted, thick] (6,0) node[below=0.5cm]{$u^D(h)$} -- (6,5.44);
\draw[->] (7.5,-0.6) -- (7.5,-0.1);
\draw[dotted, thick] (7.5,0) node[below=0.5cm]{$u^A(h)$} -- (7.5,7.06);

\end{tikzpicture}
\caption{{The functions $v^A(u)$ and $v^D(u)$, two scanning curves with $h_1<h_c<h_3$, the congested zone $\mathcal{C}$} and the free zone $\mathcal{F}$.}
\label{f:uv}
\end{figure}
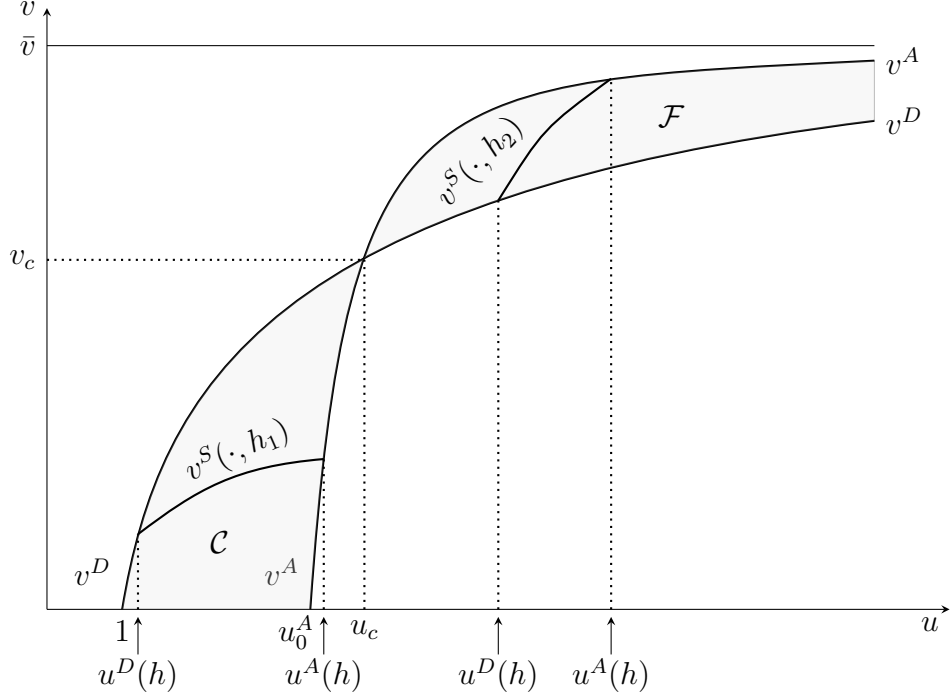

In Lagrangian coordinates Assumption (I) becomes

\smallskip

{\bf Assumption (I${}^\prime$)} The following conditions hold:
\begin{equation}\label{Assumption 1.1'}
\begin{array}{l}
v^A_u>0,\ v^D_u>0,\ v^S_u(u, h)>0,\  v^S_h(u, h)>0,
\\[2mm]
u^A(h) \ge u^D(h), \text{ with ``$=$" if and only if $h=h_c$.}
\end{array}
\end{equation}

\smallskip

In Lagrangian coordinates, the
system \eqref{systemInviscidh} becomes
\begin{equation}\label{system2h}
\left\{
\begin{array}{l}
u_t - v(u, h)_x = 0,
\\[2mm]
h_t = \chi(A) h^A(u)_t
+ \chi(D) h^D(u)_t,
\end{array}
\right.
\end{equation}
where
\begin{equation}\label{generalV(u)}
v = v(u, h) := \begin{cases}
v^D(u) &\text{ if } u \le u^D(h),
\\
v^S(u, h) &\text{ if } u^D(h)< u < u^A(h),
\\
v^A(u) &\text{ if } u\ge u^A(h),
\end{cases}
\end{equation}
and
\begin{gather*}
 A=\left\{u = u^A(h), u_t>0\right\},\qquad
 D= \left\{u = u^D(h)\ u_t<0\right\},
\\
S = \left\{ u^A(h) \ge u\ge  u^D(h),\    A={\it false} \hbox{ and } D={\it false}\right\}.
\end{gather*}
The region corresponding to \eqref{physRegion} is
\begin{equation}\label{physRegion2}
\Omega :=\left\{ (u, h)\in [1,\infty)\times[1,\infty) \  :\
u^D(h)\le u\le u^A(h) \right\}.
\end{equation}
Since the velocity $v$ takes on different forms, there could be possibilities for self-contradiction. For instance, at the intersection of scanning and acceleration mode, if the sign of $u_t$ calculated according to scanning mode and acceleration mode are different, then self-contradiction occurs.
To avoid such contradictions, the signs of the functions $v_u^A$, $v^D_u$, $v^S_u$ must be the same when the corresponding curves
$v^A(u)$, $v^D(u)$, $v^S(u, h)$ intersect,
i.e. these {\em compatibility conditions} must be required:
\begin{align*}
\sign v_u^A(u) = \sign v_u^S(u, h) \quad & \text{ when }h = h^A(u) \ne h_c,
\\
\sign v_u^D(u) = \sign v_u^S(u,h) \quad&\text{ when }h = h^D(u)\ne h_c.
\\
\sign v_u^D(u) = \sign v_u^A(u) \quad&\text{ when }h = h_c.
\end{align*}
Indeed, these conditions are implied by Assumption (I${}^\prime$).


\section{Basic waves}

In this section we find the shock and rarefaction waves needed to construct solutions for system \eqref{system2h}. A shock wave of \eqref{system2h} is a solution of the form
\begin{equation}\label{shock}
(u, h)(x, t)=
\left\{
\begin{array}{ll}
(u_-, h_-) & \hbox{ if } x-st < 0,
\\[2mm]
(u_+, h_+) & \hbox{ if } x-st > 0,
\end{array}
\right.
\end{equation}
where $(u_\pm, h_\pm)$ are constants and $s$ is the shock speed \cite{Dafermos}.
Typically, a shock wave is required to have a viscous profile derived from the corresponding viscous system
\begin{equation}\label{system2hv}
\left\{
\begin{array}{l}
u_t - v(u, h)_x = \e u_{xx},
\\[2mm]
h_t = \chi(A) h^A(u)_t
+ \chi(D) h^D(u)_t,
\end{array}
\right.
\end{equation}
and solutions of \eqref{system2hv} are expected to converge to solutions of \eqref{system2h} for $\e\to 0+$. Note that the second equation in \eqref{system2hv} has no viscosity because physically meaningless. The equations for the viscous profile of a shock wave connecting two states $(u_-,h_-)$ and $(u_+,h_+)$ with $u_-\ne u_+$ are, see \cite[(2.21)]{Corli-FanPM},
\begin{equation}\label{e:sp}
\left\{
\begin{array}{l}
u' = -s(u-u_\pm)-(v-v_\pm),
\\[2mm]
s h'=s \chi(A)\left(h^A(u)\right)' + s \chi(D)\left(h^D(u)\right)',
\\[3mm]
(u,h)(\pm\infty)=(u_\pm,h_\pm), \  (u', h')(\pm\infty)=(0,  0),
\end{array}
\right.
\end{equation}
where $' = \frac{\drm}{\drm\xi}$. The jump condition dictates
\begin{equation}\label{e:sigma}
s = - \frac{v(u_+, h_+) - v(u_-, h_-)}{u_+-u_-}.
\end{equation}
 With respect to the variable $\xi$, the boolean variables $A$ and $D$ become
\begin{equation}\label{e:twID}
A =  \left\{s u'<0 \text{ and } u=u^A(h)\right\},\qquad  D=\left\{s u'>0 \text{ and } u=u^D(h)\right\}.
\end{equation}

In the rest of this paper, we make the following assumption:

{\bf Assumption (H):} Assume Assumption (I${}^\prime$) and
\[
v^A_{uu}(u) < 0, \quad v^D_{uu}(u) < 0 \quad \hbox{ and }\quad v^S_{uu}(u,h)<0 \quad \hbox{ for } (u,h)\in\Omega.
\]
The second part of assumption (H) states the {\em concavity} of the functions $v^A$ and $v^D$, as in \cite{Newell1965, Yeo-Skabardonis}; the same requirement for $v^S$ is made by analogy. This assumption is not clearly justified by experimental data; it is made here for sake of simplicity and according to the modeling in the existing literature. However, we stress that what follows {\em does not essentially depend} on this assumption: the possible concavity of one curve and convexity of the other, or even the existence of inflection points only adds some technical difficulties but does not change the overall framework.

Notice that concavity is invariant when passing from Eulerian coordinates to Lagrangian coordinates. For brevity, we avoid to recall in the statements below that (H) is {\em always} assumed.

The following result is proved analogously to \cite[Lemma 4.1]{Corli-FanPM}.

\begin{lem}\label{lem:monotone} Assume $u_-\ne u_+$ and suppose that \eqref{e:sp} has a continuous solution $(u, h)$ with $s \neq 0$. Then
\begin{enumerate}[{(i)}]

\item $u'$ is continuous and never vanishes, hence $u$ is monotone;

\item $h$ is Lipschitz-continuous and monotone with the same type of monotonicity of $u$.
\end{enumerate}
\end{lem}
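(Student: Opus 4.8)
The plan is to analyze the first ODE in \eqref{e:sp} directly, treating it as a scalar equation for $u$ once we understand the coupling with $h$. The key observation is that whenever the state $(u(\xi),h(\xi))$ lies strictly in the interior of the scanning region $S$, the second equation of \eqref{e:sp} forces $h'=0$, so $h$ is locally constant; and whenever $(u,h)$ lies on one of the boundary curves $u=u^A(h)$ or $u=u^D(h)$ with the appropriate sign of $su'$, the second equation becomes $h'=(h^A(u))'$ or $h'=(h^D(u))'$, which upon integration along the curve means $h(\xi)$ and $u(\xi)$ move together consistently along that curve. So in all cases, $h$ is determined (as a Lipschitz function) by $u$ together with the initial condition at $\xi=-\infty$; more precisely $h=h(u)$ with $h$ piecewise given by a constant, by $h^A(u)$, or by $h^D(u)$, and these pieces are $C^1$-compatible at the junction points because of the definitions of $h^A,h^D$ and Assumption (I${}^\prime$).

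Granting that reduction, the first equation becomes an autonomous scalar ODE $u'=F(u):=-s(u-u_\pm)-(v(u,h(u))-v_\pm)$, where I would fix the branch using the $+\infty$ data: $F(u_+)=0$ by the Rankine--Hugoniot relation \eqref{e:sigma}, and similarly $F(u_-)=0$. The main point is that $F$ has no zero strictly between $u_-$ and $u_+$. This is where concavity enters: along the relevant composite curve, $v$ as a function of $u$ is concave (its $A$-, $D$-, and $S$-pieces are each concave by (H), and the one-sided derivatives match at the junctions, so the glued function $u\mapsto v(u,h(u))$ is $C^1$ and concave), hence $F(u)=-s(u-u_+)-(v-v_+)$ is a concave function of $u$ that vanishes at the two endpoints $u_\pm$; a concave function vanishing at two points is strictly signed in between, so $F\neq 0$ on the open interval. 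Consequently $u'=F(u)$ never vanishes on a trajectory connecting $u_-$ to $u_+$, and since a continuous solution of the ODE cannot have $u'$ change sign without passing through a zero of $F$, we get that $u'$ is continuous and of one sign, so $u$ is strictly monotone. This proves (i).

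For (ii), once $u$ is strictly monotone I would write $h$ as the composite $h=h(u(\xi))$ with $h(\cdot)$ the piecewise function described above. Each piece $h^A$, $h^D$ is $C^1$ (inverse of $\rho^A$, $\rho^D$, invertible by Assumption (I${}^\prime$), eq.\ \eqref{Assumption 1.3}), and a constant piece is trivially $C^1$, so $h$ as a function of $u$ is Lipschitz; composing with the Lipschitz monotone function $\xi\mapsto u(\xi)$ gives $h$ Lipschitz in $\xi$. Monotonicity of $h$ in $u$ of the same type as $u$ in $\xi$: on a scanning stretch $h$ is constant (nondecreasing in the weak sense and compatible); on an $A$- or $D$-stretch, $h=h^A(u)$ or $h^D(u)$ and these are increasing functions of $u$ (again by invertibility and the sign of $v^S_h>0$ in \eqref{Assumption 1.1'}), so $h$ is nondecreasing in $u$; since $u$ is strictly monotone in $\xi$, $h$ inherits the same type of monotonicity in $\xi$ as $u$. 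I would note the degenerate possibility $s=0$ is excluded by hypothesis, and $u_-\neq u_+$ ensures $F$ is genuinely a function of one variable with two distinct roots.

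The step I expect to be the main obstacle is the careful bookkeeping at the junction points where the trajectory crosses from the interior scanning region onto a boundary curve $u=u^A(h)$ or $u=u^D(h)$: one must check that the sign conditions in \eqref{e:twID} are consistent with the monotonicity already established (so that, e.g., a trajectory can only attach to $u=u^A(h)$ from the side on which $su'<0$), that $h$ does not oscillate between branches, and that the composite curve $u\mapsto v(u,h(u))$ really is $C^1$ there rather than merely continuous — this rests on the equalities $v^A(u^A(h))=v^S(u^A(h),h)$ etc.\ and on differentiating the relation $u=u^A(h(u))$ along the curve. I would handle this by a case analysis on which branch the state $(u_+,h_+)$ starts in and tracking the trajectory backward in $\xi$, using the fixed sign of $u'$ from (i) to rule out re-entry into an already-left branch. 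Everything else — concavity of the glued $v$, the two-roots-of-a-concave-function argument, Lipschitz regularity of $h$ — is routine given the assumptions, and this is exactly parallel to \cite[Lemma 4.1]{Corli-FanPM}.
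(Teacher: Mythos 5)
The paper offers no written proof of this lemma; it simply defers to the analogous \cite[Lemma 4.1]{Corli-FanPM}. Measured against what such a proof must contain, your argument has a genuine gap at its central step. You claim that along the trajectory the glued function $u\mapsto v(u,h(u))$ is $C^1$ and concave because ``the one-sided derivatives match at the junctions.'' They do not: at a junction such as $u=u^A(h_-)$ the scanning curve $v^S(\cdot,h_-)$ and the acceleration curve $v^A$ cross \emph{transversally} (that is how the intersection point $u^A(h)$ is defined and why it is unique), and the compatibility conditions of \S3 only force the \emph{signs} of $v^S_u$ and $v^A_u$ to agree, not their values. So the glued curve has corners, and depending on their orientation it need be neither concave nor convex. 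This is precisely why Theorems \ref{t:scz1} and \ref{t:scz2} must \emph{assume} the chord conditions \eqref{chord_S2A}, \eqref{chord_S2D}: were the composite curve automatically concave, those conditions would be free. Consequently your ``concave function with two roots has no interior zero'' step collapses (there is also a harmless sign slip: $F=\text{linear}-v$ with $v$ concave is convex, not concave, though the two-root argument would work either way if the convexity/concavity were true). A second, related problem is circularity: you define $h$ as a single-valued function of $u$ in order to set up the scalar ODE $u'=F(u)$, but $h$ is a well-defined function of $u$ along the trajectory only once $u$ is known to be monotone, which is the conclusion of part (i). You flag this bookkeeping as ``the main obstacle'' but do not resolve it.

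The lemma does not need concavity at all, because it \emph{assumes} a continuous connecting solution exists; the correct mechanism is ODE uniqueness. Since $(u,h)(\xi)$ is continuous and the right-hand side of $\eqref{e:sp}_1$ is continuous in $(u,h)$, $u'$ is continuous. From $\eqref{e:sp}_2$ with $s\neq 0$ one has $h'=\bigl[\chi(A)\,h^A_u(u)+\chi(D)\,h^D_u(u)\bigr]u'$ almost everywhere, hence $|h'|\le C|u'|$ with $C$ bounded on the compact trajectory; this already gives the Lipschitz continuity in (ii) without any global reduction $h=h(u)$. If $u'(\xi_0)=0$ at some $\xi_0$, then $(u,h)(\xi_0)$ is a rest point of $\eqref{e:sp}_1$, and the Lipschitz estimate $|u'|\le L\bigl(|u-u(\xi_0)|+|h-h(\xi_0)|\bigr)$ combined with $|h'|\le C|u'|$ and Gronwall forces $(u,h)\equiv(u,h)(\xi_0)$, contradicting $u(\pm\infty)=u_\pm$ with $u_-\ne u_+$. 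Hence $u'$ never vanishes and $u$ is strictly monotone. Part (ii) then follows from the displayed identity for $h'$, since $h^A_u$ and $h^D_u$ have a fixed (positive) sign by the invertibility guaranteed in Assumption (I${}^\prime$), so $h'$ has the same sign as $u'$ wherever it is nonzero. I would rebuild your proof along these lines and drop the concavity argument entirely.
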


For future reference, we also write the equations that must be satisfied by a rarefaction wave \cite{Dafermos}. Denoting $\xi=x/t$ and with $u(\xi)$, $h(\xi)$ the components of the solution along the rarefaction, the equations are
\begin{equation}\label{e:rar}
\left\{
\begin{array}{l}
-\xi u' - \left(v(u,h)\right)'=0,
\\
-\xi h' = -\xi \chi(A)\left(h^A(u)\right)' - \xi \chi(D)\left(h^D(u)\right)',
\\
(u, h)(\pm\infty) = (u_\pm, h_\pm).
\end{array}
\right.
\end{equation}

\subsection{Acceleration and deceleration waves}
The next lemma shows that the only waves connecting two points $(u_\pm, v_\pm)$ on the acceleration curve are rarefaction waves; they are called {\em acceleration rarefaction waves}.
\begin{lem}\label{l:rar}
Assume $u_\pm = u^A(h_\pm)$. If $u_- < u_+$, then there is a rarefaction curve connecting $(u_-,h_-)$ with $(u_+, h_+)$. If $u_- > u_+$, then no connection is possible.
\end{lem}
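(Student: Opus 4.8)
The plan is to reduce the statement to an ODE analysis of the rarefaction equations \eqref{e:rar} restricted to the acceleration curve, and then to check the Lax (entropy) geometric-entropy condition on characteristic speeds. First I would observe that along the acceleration curve we have $u = u^A(h)$, equivalently $h = h^A(u)$, so the state is parametrized by $u$ alone, and the velocity reduces to $v = v^A(u)$. A rarefaction fan is built from the characteristic field; since along the acceleration curve the relevant mode is the acceleration mode $A$, the second equation in \eqref{e:rar} becomes $-\xi h' = -\xi \left(h^A(u)\right)'$, which is automatically satisfied (both sides equal $-\xi (h^A)'(u) u'$), so the only genuine constraint is the first equation $-\xi u' - \left(v^A(u)\right)' = 0$, i.e. $u'\left(\xi + v^A_u(u)\right) = 0$. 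Hence along a nonconstant portion of the fan one must have $\xi = -v^A_u(u)$, which is precisely the statement that $-v^A_u$ is the characteristic speed of this field.

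Next I would use Assumption (H): $v^A_u > 0$ and $v^A_{uu} < 0$, so $-v^A_u < 0$ and $\xi = -v^A_u(u)$ is strictly \emph{increasing} in $u$ (since $\frac{\drm}{\drm u}(-v^A_u(u)) = -v^A_{uu}(u) > 0$). Therefore a self-similar solution $u(\xi)$ of $\xi = -v^A_u(u)$ exists and is monotone increasing in $\xi$ precisely on the interval $\xi \in [-v^A_u(u_-), -v^A_u(u_+)]$ when $u_- < u_+$; one simply sets $u(\xi)$ to be the inverse function of $\xi \mapsto -v^A_u(u)$ on that range, constant equal to $u_\pm$ outside, and defines $h(\xi) = h^A(u(\xi))$. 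This is a well-defined continuous (indeed Lipschitz, with $u$ piecewise $C^1$) weak solution interpolating the two states, which proves the first assertion. For the second assertion, if $u_- > u_+$ the required relation $\xi = -v^A_u(u)$ would force $\xi$ to be \emph{decreasing} in $\xi$ along the fan — a contradiction — so no centered rarefaction connection exists; equivalently, the characteristic speed on the left, $-v^A_u(u_-)$, would exceed that on the right, $-v^A_u(u_+)$, which is exactly the condition that any admissible wave between these states must be a shock (compressive), not a rarefaction. I would phrase this last part carefully as: the rarefaction ODE \eqref{e:rar} has no nonconstant continuous solution with $u(-\infty) = u_- > u_+ = u(+\infty)$, because $u$ would have to be nondecreasing in $\xi$ along any rarefaction fan.

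The main obstacle — and the point deserving the most care — is the handling of the boolean variable $\chi(A)$ and the verification that the state genuinely stays in the acceleration mode throughout the fan, so that the second equation of \eqref{e:rar} really does collapse as claimed rather than switching to the scanning-mode equation $h' = 0$. Concretely: along the fan $u$ is increasing, hence $u_t = u'(\xi)\cdot(-x/t^2)$; one must check the sign convention in the definition $A = \{u = u^A(h),\ u_t > 0\}$ is consistent with an expanding (rarefaction) wave, i.e. that the acceleration mode is indeed active and not $S$. Since the candidate solution has $u = u^A(h)$ identically and $u$ increasing in $\xi$, and since $\xi = x/t$, the sign of $u_t$ along the fan is $-\xi u'/t$; in the physically relevant fan $\xi < 0$ (as $-v^A_u < 0$), so $u_t > 0$ and the acceleration mode is correctly selected. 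Once this consistency check is in place, the rest is the routine inverse-function construction above, and one may also invoke \lemref{lem:monotone}-type reasoning to note there is nothing else to connect the two states other than this rarefaction when $u_- < u_+$. I would also remark, as the paper does for shocks, that concavity (Assumption (H)) is what makes the characteristic speed genuinely monotone and hence the fan non-overlapping; without it one would get compound waves, but the existence/nonexistence dichotomy of the lemma would still hold by the same first-order argument.
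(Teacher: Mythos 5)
Your construction of the rarefaction for $u_-<u_+$ is essentially the paper's own proof: reduce $\eqref{e:rar}_1$ to $\xi=-v^A_u(u)$, invert using $v^A_{uu}<0$, set $h(\xi)=h^A\left(u(\xi)\right)$, and verify the mode selection by checking that $\xi<0$ and $u'>0$ give $u_t=-\xi u'/t>0$, so that $A$ is true and $\eqref{e:rar}_2$ collapses as claimed. That half is complete and correct, including the care you take with the boolean $\chi(A)$.

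The gap is in the case $u_->u_+$. The lemma asserts that \emph{no connection} is possible, meaning neither a rarefaction nor a shock joins the two states; you only rule out the rarefaction. Worse, your remark that the ordering of characteristic speeds ``is exactly the condition that any admissible wave between these states must be a shock (compressive)'' points in the wrong direction: in the scalar theory that ordering would \emph{produce} an admissible shock, so on its own your argument leaves open the possibility of a shock connection and therefore does not establish the second assertion. What actually kills the shock is the hysteresis variable. A shock profile must solve \eqref{e:sp} with $(u,h)(\pm\infty)=(u_\pm,h_\pm)$ and $h_-\ne h_+$ (since $u^A$ is invertible and $u_-\ne u_+$). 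By Lemma~\ref{lem:monotone}, $u$ is monotone, here decreasing, and $s=-\bigl(v^A(u_+)-v^A(u_-)\bigr)/(u_+-u_-)<0$ because $v^A_u>0$; hence $su'>0$, so the boolean $A=\{su'<0,\ u=u^A(h)\}$ from \eqref{e:twID} is false, and $D$ is false because the profile does not lie on the deceleration curve. Then $\eqref{e:sp}_2$ forces $h'\equiv 0$, so $h\equiv h_-$ can never reach $h_+$. This is precisely what the paper means when it says that $\eqref{e:sp}_2$ cannot be satisfied, and it is the piece your proposal is missing.
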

\begin{proof}
Assume $u_-<u_+$. Let $h=h^A(u)$. Since $v=v^A(u)$, then
\eqref{e:rar}$_1$ becomes
\begin{equation}\label{scalarRW2}
-\xi =v^A_u(u).
\end{equation}
Because $v^A_{uu}<0$, the function $v^A_u$ can be inverted.
We define
\begin{equation}\label{RW}
u(\xi) =
\left\{
\begin{array}{ll}
u_- & \text{ if } \xi\le  -v^A_u(u_-),
\\
(v^A_u)^{-1}(-\xi)& \text{ if } -v^A_u(u_-) <\xi < -v^A_u(u_+),
\\
u_+ &  \text{ if } \xi\ge  -v^A_u(u_+),
\end{array}
\right.
\end{equation}
and $h(\xi ) := h^A\left(u(\xi)\right)$.
Then $(u, h)(\xi) $ satisfies
$\eqref{e:rar}_1$ and $\eqref{e:rar}_3$.
To show that it satisfies \eqref{e:rar}$_2$,
we notice that from \eqref{scalarRW2} it follows that $\xi<0$ and
$-1 = v^A_{uu} u'$; this implies $u'>0$, which infers $u_t = -\xi u'/t>0$ and hence $A={\it true}$.
Thus, equation $\eqref{e:rar}_2$ is also satisfied since $h=h^A(u)$.

In the case $u_->u_+$ there is no shock wave connecting $(u_-,v_-)$ with $(u_+,v_+)$ because the equation $\eqref{e:sp}_2$ for $h$ cannot be satisfied.
\end{proof}

Analogously, the only waves connecting $(u_\pm, v_\pm)$ on the deceleration curve are shock waves; they are called {\em deceleration shock waves}. This is the content of the following lemma, whose proof is analogous to that of Lemma \ref{l:rar}.

\begin{lem}\label{l:rar2}
Assume $u_\pm = u^D(h_\pm)$. If $u_- > u_+$, then there is a shock curve connecting $(u_-,h_-)$ with $(u_+,h_+)$. If $u_- < u_+$  then no connection is possible.
\end{lem}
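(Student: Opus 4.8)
The plan is to follow the proof of \lemref{l:rar}, interchanging the roles of the acceleration and deceleration curves, of rarefactions and shocks, and of the two inequalities. For the case $u_->u_+$ I would seek the profile on the deceleration curve, i.e.\ set $h=h^D(u)$, so that $u=u^D(h)$ (as $h^D$ and $u^D$ are mutually inverse) and $v(u,h)=v^D(u)$ by \eqref{generalV(u)}, while the boundary conditions of \eqref{e:sp} become $h(\pm\infty)=h^D(u_\pm)=h_\pm$ as required. With this substitution the first line of \eqref{e:sp} reduces to the scalar traveling-wave ODE $u'=-s(u-u_-)-(v^D(u)-v^D(u_-))=:g(u)$ for the genuinely nonlinear conservation law $u_t-v^D(u)_x=\e u_{xx}$, whose flux $-v^D$ is strictly decreasing and, by Assumption (H), strictly convex. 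By \eqref{e:sigma} $g$ vanishes at both $u_\pm$, and strict convexity then gives $g<0$ between $u_+$ and $u_-$ with $g'(u_-)>0>g'(u_+)$, so $u_-$ repels and $u_+$ attracts: there is a heteroclinic orbit decreasing monotonically from $u_-$ to $u_+$, which is the sought viscous shock profile. It then remains to verify the second line of \eqref{e:sp}: since $v^D$ is increasing, \eqref{e:sigma} gives $s<0$, hence $su'>0$ along the profile, so by \eqref{e:twID} mode $D$ is active and mode $A$ is not, and that equation collapses to $h'=(h^D(u))'$, which holds by the choice $h=h^D(u)$.

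For the case $u_-<u_+$ I would argue that no basic wave can join the two states. A shock is impossible: imposing $h=h^D(u)$ yields the same $g$ but now $g'(u_-)<0<g'(u_+)$, so $u_-$ attracts and no heteroclinic runs from $u_-$ to $u_+$ (equivalently, the Lax condition for the convex flux $-v^D$ fails); and dropping the assumption that the profile stays on the deceleration curve, \lemref{lem:monotone} forces $u$ to be increasing, hence $su'<0$, so mode $D$ is inactive throughout and mode $A$ is inactive near $\xi=\pm\infty$ (there $u\to u^D(h_\pm)\neq u^A(h_\pm)$, since equality holds only at $h_c$ by \eqref{Assumption 1.1'}), whence \eqref{e:sp} pins $h$ to a constant near each end, contradicting $h_-\neq h_+$ (which holds because $h^D$ is injective). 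A rarefaction is impossible too: along it $u_t>0$, so mode $D$, which needs $u_t<0$, is inactive; mode $A$ is again inactive away from $h_c$; hence \eqref{e:rar} forces $h$ constant, once more contradicting $h_-\neq h_+$.

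The positive direction is essentially the textbook construction of a viscous shock profile for a convex scalar flux, so the only genuine task there is the sign bookkeeping that keeps mode $D$ active and mode $A$ inactive along the profile. The main obstacle is the non-existence part: because the scalar law $u_t-v^D(u)_x=0$, \emph{decoupled} from $h$, does admit a rarefaction exactly in the regime $u_-<u_+$, the non-existence is a genuine effect of the hysteresis coupling through the $h$-equation, and the delicate step is to exclude a spurious activation of mode $A$ somewhere along any putative wave connecting two points of the deceleration curve---which is precisely where the strict inequality $u^A(h)>u^D(h)$ for $h\neq h_c$ in \eqref{Assumption 1.1'} is used.
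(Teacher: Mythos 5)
Your proof is correct and is essentially the argument the paper intends: the paper does not write out a proof of \lemref{l:rar2}, stating only that it is analogous to that of \lemref{l:rar}, and your construction (riding the deceleration curve via $h=h^D(u)$, using concavity of $v^D$ to get the decreasing heteroclinic profile of the scalar ODE, and checking $s<0$, $u'<0$, hence $su'>0$ so that mode $D$ is active and the $h$-equation holds) is exactly that analogue with the roles of $v^A$/$v^D$ and rarefaction/shock interchanged. The non-existence half likewise matches the paper's one-line dismissal that $\eqref{e:sp}_2$ cannot be satisfied, which you flesh out correctly by observing that neither mode can be active so $h$ cannot move between the distinct values $h_\pm$.
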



\subsection{Scanning waves} If $(u_\pm,h_\pm)$ are on the same scanning curve then $h_-=h_+$. Equation $\eqref{system2h}_1$ becomes $u_t - v(u,h_-)_x=0$ while $\eqref{system2h}_2$ is trivially satisfied. Then system \eqref{system2h} reduces to a scalar conservation law.
Thus, any two points $(u_\pm, h_-=h_+)$ on a scanning curve
can be connected either by a shock, if $u_->u_+$, or by a rarefaction wave, if $u_-<u_+$.
Here, we used the condition $v^S_{uu}<0$ in Assumption (H). We collectively call these waves as {\em scanning waves}.


\subsection{Stationary shocks} These shocks have zero speed; the equations for the profiles are given by \eqref{e:sp} with $s=0$ and $v_-:=v(u_-,h_-)=v(u_+,h_+)=:v_+$ by \eqref{e:sigma}.

\begin{thm}
Consider two states $\left(u_\pm,h_\pm\right)\in\Omega$ such that $v_-=v_+$. Then system \eqref{e:sp} has infinitely many solutions with $s=0$ connecting $\left(u_-,h_-\right)$ to $\left(u_+,h_+\right)$.
\end{thm}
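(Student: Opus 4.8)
The plan exploits that when $s=0$ the profile system \eqref{e:sp} degenerates. The boolean sets become $A=\{0<0,\ u=u^A(h)\}=\emptyset$ and $D=\{0<0,\ u=u^D(h)\}=\emptyset$, so $\chi(A)=\chi(D)=0$; hence $\eqref{e:sp}_2$ reduces to the identity $0=0$, while $\eqref{e:sp}_1$ becomes the scalar autonomous ODE $u'=v_- - v(u,h)$, to be solved with $(u,h)(\pm\infty)=(u_\pm,h_\pm)$ and $(u',h')(\pm\infty)=(0,0)$. Thus a stationary profile is any curve $\xi\mapsto(u(\xi),h(\xi))$ satisfying $u'=v_- - v(u,h)$ together with these limits, and since $h$ is unconstrained by the equations, the multiplicity of solutions should come entirely from the freedom in choosing how the profile moves in the $h$-direction. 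I would first dispose of the case $u_-=u_+$: on the admissible $h$-slice of $\Omega$ the velocity $v(u_-,\cdot)=v^S(u_-,\cdot)$ is strictly increasing (as $v^S_h>0$), so $v_-=v_+$ forces $h_-=h_+$ and only the constant state occurs; I therefore assume $u_-\ne u_+$, say $u_-<u_+$, the case $u_->u_+$ being symmetric (one dips above the level arc instead of below).

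Next I would describe the level set $L:=\{v=v_-\}\cap\Omega$. On $\Omega$ one has $v^A(u)\le v(u,h)\le v^D(u)$ (the scanning curves lie between the acceleration and deceleration curves, see Figure~\ref{f:uv}), so $v_-$ is admissible at abscissa $u$ iff $u\in[u_D^*,u_A^*]$ with $u_D^*:=(v^D)^{-1}(v_-)\le u_A^*:=(v^A)^{-1}(v_-)$; in particular $u_\pm\in[u_D^*,u_A^*]$. Applying the implicit function theorem to $v^S(u,h)=v_-$ (here $v^S_h>0$) yields a $C^1$ function $h_L$ on $[u_D^*,u_A^*]$ with $L=\{(u,h_L(u)):u\in[u_D^*,u_A^*]\}$, $h_L'=-v^S_u/v^S_h<0$, and $h^A(u)\le h_L(u)\le h^D(u)$ with equality only at $u=u_A^*$, resp. $u=u_D^*$; moreover $h_L(u_\pm)=h_\pm$. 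Now fix a smooth bump $\rho$ on $[u_-,u_+]$ with $\rho(u_\pm)=0$ and $0<\rho<1$ on $(u_-,u_+)$, fix $\varepsilon\in(0,1)$, and set
\[
\phi(u):=h_L(u)-\varepsilon\,\rho(u)\bigl(h_L(u)-h^A(u)\bigr),\qquad u\in[u_-,u_+].
\]
Then $\phi(u_\pm)=h_\pm$, and $h^A(u)\le\phi(u)\le h_L(u)\le h^D(u)$ shows the graph of $\phi$ lies in $\Omega$; on $(u_-,u_+)$ it lies in the interior of $\Omega$, so $v(u,\phi(u))=v^S(u,\phi(u))$, and since there $\phi(u)<h_L(u)$ and $v^S_h>0$ we get
\[
g(u):=v_- - v\bigl(u,\phi(u)\bigr)=v_- - v^S\bigl(u,\phi(u)\bigr)>0\ \text{ on }(u_-,u_+),\qquad g(u_\pm)=0 .
\]

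Finally I would build the profile from $g$. Fix $u_0\in(u_-,u_+)$ and let $\xi(u):=\int_{u_0}^{u}g(t)^{-1}\drm t$ for $u\in(u_-,u_+)$. Being Lipschitz and vanishing at $u_\pm$, $g$ makes this integral diverge at both ends, so $\xi(\cdot)$ is a smooth strictly increasing bijection of $(u_-,u_+)$ onto $\R$; let $u(\cdot)$ be its inverse and $h(\xi):=\phi(u(\xi))$. Then $(u,h)(\xi)\in\Omega$, $u'=g(u)=v_- - v(u,h)$ solves $\eqref{e:sp}_1$, $\eqref{e:sp}_2$ holds trivially, $u(\pm\infty)=u_\pm$, $h(\pm\infty)=h_\pm$, and $u'=g(u)\to0$, $h'=\phi'(u)g(u)\to0$ as $\xi\to\pm\infty$; hence $(u,h)$ is a stationary solution joining the two states. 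Distinct values of $\varepsilon\in(0,1)$ give distinct curves in $\Omega$, hence genuinely distinct solutions, so there are infinitely many. I expect the only points needing real care to be the geometric claim that $\{v=v_-\}\cap\Omega$ is exactly the single monotone arc $\{h=h_L(u)\}$ through both data points, and the behavior of $g$ at the endpoints (in particular when $(u_-,h_-)$ or $(u_+,h_+)$ lies on $\partial\Omega$, so that $h_L$ meets $h^A$ or $h^D$ there) — everything else is routine ODE and continuity.
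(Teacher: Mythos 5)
Your argument is essentially the paper's own proof made explicit: both reduce the $s=0$ profile equations to the scalar ODE $u'=v_--v\left(u,h(u)\right)$ along a chosen curve $h=h(u)$ in $\Omega$ joining the two states, on which $v$ stays strictly on one side of $v_-$, obtain a monotone heteroclinic orbit, and derive the infinitude of solutions from the freedom in choosing that curve; your additions (the level-set parametrization $h_L$, the bump-function family $\phi$, the divergence of $\int g^{-1}$ forced by the Lipschitz vanishing of $g$ at $u_\pm$, and the disposal of the case $u_-=u_+$) are welcome details that the paper leaves implicit. One caveat: the inequalities $h^A(u)\le h_L(u)\le h^D(u)$ and the use of $h^A$ as the lower envelope in the definition of $\phi$ are specific to the congested zone; in the free zone the $h$-slice of $\Omega$ is $[h^D(u),h^A(u)]$, so there one must perturb toward $h^D$ instead --- the mechanism (lower $h$ to lower $v$ while remaining in $\Omega$, since $v^S_h>0$) is otherwise unchanged.
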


\begin{proof}
When $s=0$, the system $\eqref{e:sp}$ is reduced to
\begin{equation}\label{e:tw_st}
\left\{
\begin{array}{l}
u' = -v(u,h)+v_-,
\\[2mm]
(u,h)(\pm\infty) = \left(u_\pm,h_\pm\right).
\end{array}
\right.
\end{equation}
If $u_->u_+$, we select $h=h(u)$ in such a way that, see Figure \ref{f:stat-shock},
\begin{equation}\label{e:vvr}
h_\pm=h\left(u_\pm\right)\quad \hbox{ and }\quad v\left(u,h(u)\right)>v_-\quad \hbox{ for }u\in(u_+,u_-).
\end{equation}
%
%
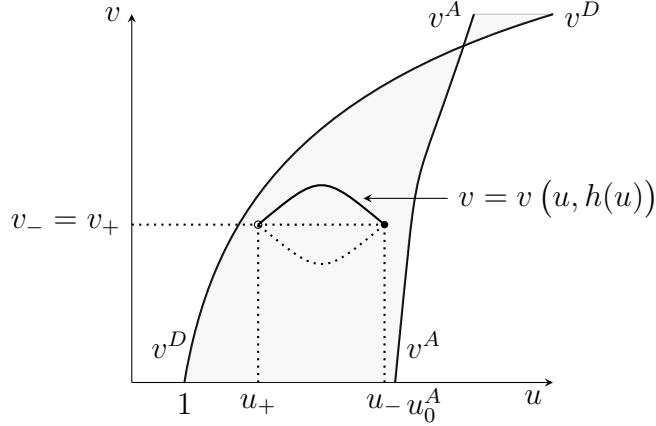
\begin{figure}[htbp]
\begin{tikzpicture}[>=stealth, scale=0.7]


\draw[->] (0,0) -- (8,0) node[below=0.2cm, left] {$u$} coordinate (x axis);
\draw[->] (0,0) -- (0,7) node[left] {$v$} coordinate (y axis);

\draw[thick] (1,0) node[below]{$1$} node[above=0.5cm, left=-0.2cm]{$v^D$} .. controls (1.5,3) and (3,5.5) .. (8,7) node[right]{$v^D$};

\draw[thick] (6.5,7) node[left]{$v^A$} .. controls (5.2,3)  and (5.5,5) .. (5,0) node[below=0.3cm, right]{$u_0^A$} node[above=0.5cm, right]{$v^A$};

\filldraw[fill=gray!20!white, draw=black, nearly transparent] (1,0) .. controls (1.5,3) and (3,5.5) ..  (8,7) -- (6.5,7) .. controls (5.2,3) and (5.5,5) .. (5,0) -- cycle;

\draw[thick] (4.8,3)  .. controls (3.6,4) .. (2.4,3);
\draw[thick, dotted] (4.8,3)  .. controls (3.6,2) .. (2.4,3);
\draw[thick, dotted] (4.8,0) node[below]{$u_-$} -- (4.8,3);
\draw[thick, dotted] (2.4,0) node[below]{$u_+$} -- (2.4,3);
\draw[thick, dotted] (0,3) node[left]{$v_-=v_+$} -- (4.8,3);

\fill (4.8,3) circle (2pt);
\draw (2.4,3) circle (2pt);

\draw[->] (6,3.5) node[right]{$v=v\left(u,h(u)\right)$} -- (4.4,3.5);

\end{tikzpicture}
\caption{{States $(u_-,v_-)$ and $(u_+,v_+=v_-)$ can be connected by a stationary shock.}}
\label{f:stat-shock}
\end{figure}

Then the unstable trajectory issued from the equilibrium point $(u_-,h_-)$ and entering into the region $\{u<u_-\}$ will continue to decrease as $\xi$ increases, until it enters another equilibrium point $(u_+,h_+)$ at $\xi=\infty$. Then $\left(u(\xi), h(u(\xi))\right)$ is indeed a solution to  \eqref{e:vvr}.

If $u_-<u_+$, the proof is analogous but choosing $h=h(u)$ such that $v\left(u,h(u)\right)<v_-$ in \eqref{e:vvr}.
\end{proof}


\subsection{Scanning to acceleration or deceleration shocks}
In this section we focus on shock waves connecting a state on a scanning curve with a state located either on the acceleration curve or on the deceleration curve.  We refer to Figure \ref{f:acc-dec-shock}.

\begin{figure}[htbp]

\begin{tabular}{cc}

\begin{tikzpicture}[>=stealth, scale=0.7]

\draw[->] (0,0) -- node[midway, below=0.6cm]{$(a)$} (8,0) node[below=0.2cm, left] {$u$} coordinate (x axis);
\draw[->] (0,0) -- (0,7) node[left] {$v$} coordinate (y axis);

\draw[thick] (1,0) node[below=0.3cm,left=0.0cm]{$1$} node[above=0.5cm, left=-0.2cm]{$v^D$} .. controls (1.5,3) and (3,5.5) .. (8,7) node[right]{$v^D$};

\draw[thick] (6.5,7) node[left]{$v^A$} .. controls (5.2,3)  and (5.5,5) .. (5,0) node[above=4cm, right=0.7cm]{$v^A$};

\filldraw[fill=gray!20!white, draw=black, nearly transparent] (1,0) .. controls (1.5,3) and (3,5.5) ..  (8,7) -- (6.5,7) .. controls (5.2,3) and (5.5,5) .. (5,0) -- cycle;

\draw[thick] (1.2,1) .. node[midway, sloped, below]{$h=h_-$} controls (3,2)  and (4,1.8) .. (5.2,2);
\draw[thick,dotted] (1.2,0) node[below=0.3cm,right=-0.3cm]{$u^D(h_-)$} -- (1.2,1);
\draw[thick,dotted] (5.2,0) node[below]{$u^A(h_-)$} -- (5.2,2);

\draw[thick] (2.1,3.1) .. node[near end, sloped, above]{$h=h_+$} controls (3,3.5)  and (3,3.7) .. (5.48,4);

\path [draw=black,thick,postaction={on each segment={mid arrow=black,thick}}]
(3,1.72) -- (5.47,3.99);
\draw[thick,dotted] (5.48,0) node[above=0.2cm,right=0.0cm]{$u_+$} -- (5.48,4);
\draw[thick,dotted] (3,0) node[below=0.3cm,right=-0.1cm]{$u_-$} -- (3,1.72);

\fill (3,1.72) circle (2pt);
\draw (5.48,4) circle (2pt);
\end{tikzpicture}
\begin{tikzpicture}[>=stealth, scale=0.7]
\draw[->] (0,0)  -- node[midway, below=0.6cm]{$(b)$} (8,0) node[below=0.2cm, left=-0.2cm] {$u$} coordinate (x axis);
\draw[->] (0,0) -- (0,7) node[left] {$v$} coordinate (y axis);

\draw[thick] (1,0) node[below=0.3cm,left=0.0cm]{$1$} node[above=0.5cm, left]{$v^D$} .. controls (1.5,3) and (3,5.5) .. (8,7) node[right]{$v^D$};

\draw[thick] (6.5,7) node[left]{$v^A$} .. controls (5.2,3)  and (5.5,5) .. (5,0) node[below]{$u_0^A$} node[above=0.5cm, right]{$v^A$};

\filldraw[fill=gray!20!white, draw=black, nearly transparent] (1,0) .. controls (1.5,3) and (3,5.5) ..  (8,7) -- (6.5,7) .. controls (5.2,3) and (5.5,5) .. (5,0) -- cycle;

\draw[thick] (1.2,1) .. node[midway, sloped, below]{$h=h_+$} controls (3,2)  and (4,1.8) .. (5.2,2);
\draw[thick,dotted] (1.2,0) node[below=0.3cm,right=-0.4cm]{$u_+$} -- (1.2,1);

\draw[thick] (2.1,3.1) .. node[near end, sloped, above]{$h=h_-$} controls (3,3.5)  and (3,3.7) .. (5.48,4);
\draw[thick,dotted] (2.1,0) node[below=0.3cm,right=-0.5cm]{$u^D(h_-)$}-- (2.1,3.1);

\path [draw=black,thick,postaction={on each segment={mid arrow=black,thick}}]
(4.05,3.8) -- (1.23,1.03);
\draw[thick,dotted] (4,0) node[below]{$u_-$} -- (4,3.8);

\fill (4,3.8) circle (2pt);
\draw (1.2,1) circle (2pt);

\end{tikzpicture}

\end{tabular}
\caption{{$(a)$: An acceleration shock, and $(b)$: a deceleration shock connecting states $(v_-, h_-)$ and $(v_+, h_+)$.}}
\label{f:acc-dec-shock}
\end{figure}
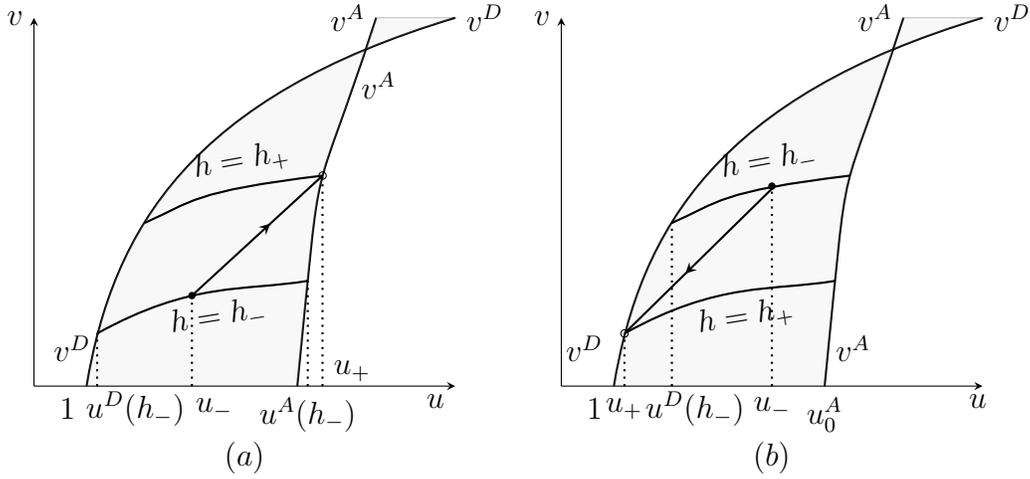

\begin{thm}\label{t:scz1}
Assume $(u_-,h_-)\in\Omega$, $u_-<u_+=u^A(h^+)$, $h_-<h_+$ and $u^D(h_-)\le u_- < u^A(h_-)$. If in the interval $(u_-,u_+)$ the chord condition
\begin{equation}\label{chord_S2A}
\frac{v_+-v_-}{u_+-u_-} > \frac{v-v_-}{u-u_-}
\end{equation}
is satisfied for $v=v(u,h_-)$ and $v=v^A(u)$, then there is a traveling wave connecting $(u_-,h_-)$ with $(u_+,h_+)$.
\end{thm}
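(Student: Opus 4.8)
The plan is to construct the viscous profile of the shock directly, as a solution of the ODE system \eqref{e:sp}, by concatenating two arcs: a \emph{scanning arc} on which the hysteresis parameter is frozen at $h_-$, followed by an \emph{acceleration arc} on which the state slides along the curve $u=u^A(h)$. First I would record the geometry. Since $v(\cdot,h_-)$ is increasing, evaluating \eqref{chord_S2A} at an interior point gives $\frac{v_+-v_-}{u_+-u_-}>\frac{v(u,h_-)-v_-}{u-u_-}>0$, so $v_+>v_-$ and the Rankine--Hugoniot speed $s$ from \eqref{e:sigma} is strictly negative; in particular $s\neq 0$, so \lemref{lem:monotone} applies and we look for a profile with $u$ strictly increasing. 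Because $u^A$ is increasing (Assumption (I${}^\prime$)) and $h_-<h_+$, the switching value $u^*:=u^A(h_-)$ satisfies $u_-<u^*<u^A(h_+)=u_+$, so both arcs are nondegenerate. It is convenient to set $\ell(u):=v_--s(u-u_-)$, the segment joining $(u_-,v_-)$ to $(u_+,v_+)$, so that $\eqref{e:sp}_1$ reads $u'=\ell(u)-v(u,h)$, and \eqref{chord_S2A} says precisely that $\ell$ lies strictly above the graph of $v(\cdot,h_-)$ on $(u_-,u^*)$ and strictly above the graph of $v^A$ on $(u_-,u_+)$.

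\emph{Scanning arc.} On $u^D(h_-)\le u\le u^A(h_-)$ the flags of \eqref{e:twID} are both false: $u<u^A(h_-)=u^A(h)$ excludes $A$, while $su'\le0$ contradicts the condition $su'>0$ in $D$. Hence $\eqref{e:sp}_2$ forces $h\equiv h_-$ and the first equation reduces to the scalar problem $u'=\Phi(u):=\ell(u)-v^S(u,h_-)$, with $\Phi(u_-)=0$ and $\Phi>0$ on $(u_-,u^*)$ by \eqref{chord_S2A}. Thus the solution issued into this interval is strictly increasing, stays in $\Omega$ with $u_-\le u\le u^*$ and in genuine scanning mode, tends to $u_-$ as $\xi\to-\infty$, and reaches $u^*$ at a finite $\xi_0$.

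\emph{Acceleration arc and matching.} At $u=u^*$ the scanning and acceleration curves meet, $v^S(u^*,h_-)=v^A(u^*)$, so $u'$ agrees on the two sides of the switch and the concatenated curve has continuous $u'$, as \lemref{lem:monotone}(i) predicts. For $\xi>\xi_0$ the orbit is in acceleration mode: $u=u^A(h)$ together with $su'<0$ makes $A$ true, so $\eqref{e:sp}_2$ gives $h'=(h^A)'(u)\,u'$, i.e. $\bigl(h-h^A(u)\bigr)'=0$; since $h=h_-=h^A(u^*)$ at $\xi_0$ we get $h=h^A(u)$, so the state stays on $\{u=u^A(h)\}\subset\partial\Omega$ with $h$ increasing from $h_-$ towards $h_+$. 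On this arc $v(u,h)=v^A(u)$, hence $u'=\Psi(u):=\ell(u)-v^A(u)$, with $\Psi>0$ on $(u^*,u_+)$ by \eqref{chord_S2A} and $\Psi(u_+)=0$; so $u$ increases monotonically and tends to $u_+$ as $\xi\to+\infty$, with $h\to h_+$. Concatenating the two arcs yields a continuous $(u,h)(\xi)$ solving \eqref{e:sp} with $(u,h)(\pm\infty)=(u_\pm,h_\pm)$ and $(u',h')(\pm\infty)=(0,0)$, which is the required traveling wave.

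The work on each arc is a one-variable comparison; the non-routine points are the bookkeeping at the interface and the behaviour at the endpoints. First, one must verify that at $u^*$ the piecewise-defined field of \eqref{e:sp} is continuous and that the flags $A,D,S$ are consistently assigned on each arc, so that no self-contradiction of the kind discussed after \eqref{physRegion2} occurs; this is exactly where $u^*\in(u_-,u_+)$ and $s\neq0$ enter. Second, the rest points at $u_\pm$ need not be hyperbolic: if the chord $\ell$ is tangent to $v^A$ at $u_+$ --- which the concavity in Assumption (H) permits --- the linearization degenerates, but since $\Phi$ and $\Psi$ are strictly positive in the interior of their intervals with a simple zero only at the relevant endpoint, $\int du/\Phi$ diverges near $u_-$ and $\int du/\Psi$ diverges near $u_+$, which forces the monotone orbit to approach $u_-$ only as $\xi\to-\infty$ and $u_+$ only as $\xi\to+\infty$; no stable/unstable manifold argument is needed. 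I expect this interface-and-endpoint bookkeeping to be the only genuinely delicate part of the proof.
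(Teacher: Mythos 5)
Your proposal is correct and follows essentially the same route as the paper: freeze $h=h_-$ on the scanning segment up to $u^*=u^A(h_-)$, switch to $h=h^A(u)$ beyond it, reduce $\eqref{e:sp}_1$ to a scalar monotone ODE driven by the chord condition, and verify $\eqref{e:sp}_2$ by checking the flags $A$, $D$ on each arc with continuity at the junction $\xi_0$. Your additions (the sign $s<0$, the ordering $u_-<u^*<u_+$, and the divergence of $\int du/\Phi$ and $\int du/\Psi$ at the endpoints) only make explicit what the paper leaves implicit.
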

\begin{proof}
We refer to Figure \ref{f:acc-dec-shock}{\em (a)}. We define
\begin{equation}\label{e:hscz1}
h(u) =
\left\{
\begin{array}{ll}
h_- &\hbox{ if }u\in\left[u_-,u^A(h_-)\right],
\\[2mm]
h^A(u) &\hbox{ if }u\in\left[u^A(h_-),u_+\right].
\end{array}
\right.
\end{equation}
Then
\eqref{e:sp}$_1$ has an increasing solution with $u(\pm\infty)=u_\pm$
thanks to the chord condition \eqref{chord_S2A}.
This gives $(u, h) = \left(u(\xi), h(u(\xi))\right)$ which satisfies \eqref{e:sp}$_1$.
To see that it also satisfies
$\eqref{e:sp}_2$,
we notice that   when $\xi$ increases from $-\infty$,
the pair $(u, h)(\xi)$ moves along the scanning curve $v=v^S(u, h_-)$ until some point $\xi=\xi_0$ where $u(\xi_0)=u^A(h_-)$.
Equation $\eqref{e:sp}_2$ is trivially satisfied for $\xi\in(-\infty, \xi_0)$.
Over the interval $\xi\in (\xi_0, \infty)$,
we have $(u, h)(\xi) = \left(u, h^A(u)\right)(\xi)$ and $u'>0$; hence $\chi(A)=1$ holds, making
the equation $\eqref{e:sp}_2$ satisfied.
At the point $\xi=\xi_0$ where the two parts join, the pair $(u, h)(\xi)$ is continuous. Thus,
$\eqref{e:sp}_2$ is satisfied for all $\xi\in(-\infty, \infty)$.
\end{proof}

\begin{remark}\label{S2D and S2A}
The above theorem establishes the existence of acceleration shocks. Notice that LWR model does not allow acceleration shocks. 
Acceleration shocks are not mentioned in existing literature as far as we know. However, many observations of real traffic, see e.g. Fig 3 of \cite{Kaufmann_et_al2017}, show that the acceleration front of a stop-and-go patterns can be as sharp as the deceleration shock serving as the other front, and the acceleration front is not visually expanding like a rarefaction wave. Thus, we think this sharp acceleration front is an acceleration shock. 
 
The conditions of \thmref{t:scz1} cannot hold in free zone $u> u_c$ because of the concavity of $v=v^A(u)$.
\end{remark}

\begin{thm}\label{t:scz2}
Assume $\left(u_-,h_-\right)\in \Omega$,  $h_->h_+\ge1$ and $u_+ = u^D(h_+)$.
If for every $u\in(u_+,u_-)$ the chord condition
\begin{equation}\label{chord_S2D}
\frac{v_+-v_-}{u_+-u_-} > \frac{v(u, h_-)-v_-}{u-u_-}
\end{equation}
holds, then there is a shock wave connecting $(u_-,h_-)$ with $(u_+,h_+)$.
\end{thm}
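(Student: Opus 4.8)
The plan is to follow the template of the proof of \thmref{t:scz1}: exhibit an explicit gluing $h=h(u)$, solve the scalar profile equation $\eqref{e:sp}_1$ for a monotone $u(\xi)$, set $h(\xi):=h(u(\xi))$, and check that the second profile equation $\eqref{e:sp}_2$ and the boolean conditions $\eqref{e:twID}$ hold. First I would record the order relations forced by Assumption~(I${}^\prime$): $u^D$ is strictly increasing, so that $u_+=u^D(h_+)<u^D(h_-)\le u_-$ and $u^D(h_-)$ lies in $(u_+,u_-)$ (the scanning portion degenerating to a point only if $u_-=u^D(h_-)$); moreover, along the scanning curve $h=h_-$ one reads off from $\eqref{generalV(u)}$ that $v(u,h_-)=v^S(u,h_-)$ for $u^D(h_-)\le u\le u_-$ and $v(u,h_-)=v^D(u)$ for $u_+\le u\le u^D(h_-)$. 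Then I set
\[
h(u)=\begin{cases} h_- & \text{if } u\in[u^D(h_-),u_-],\\[1mm] h^D(u) & \text{if } u\in[u_+,u^D(h_-)].\end{cases}
\]
This is continuous since $h^D(u^D(h_-))=h_-$, it satisfies $h(u_\pm)=h_\pm$, and, using $v^D(u)=v^S(u,h^D(u))$ on the lower piece, it gives $v(u,h(u))=v(u,h_-)$ on all of $[u_+,u_-]$.

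Next I would build the profile. Let $s$ be the speed $\eqref{e:sigma}$ and put $\phi(u):=-s(u-u_-)-\bigl(v(u,h(u))-v_-\bigr)$, so that $\phi(u_-)=\phi(u_+)=0$; for $u\in(u_+,u_-)$, dividing the inequality $\phi(u)<0$ by the negative number $u-u_-$ turns it into precisely the chord condition $\eqref{chord_S2D}$ (with $v=v(u,h_-)$, which is what $v(u,h(u))$ equals). Since $v(\cdot,h_-)$ is continuous and piecewise $C^1$, with at most a corner at $u=u^D(h_-)$, the field $\phi$ is Lipschitz and strictly negative on $(u_+,u_-)$, so the autonomous ODE $u'=\phi(u)$ has a solution $u(\xi)$ that decreases strictly from $u_-$ at $\xi=-\infty$ to $u_+$ at $\xi=+\infty$. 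With $h(\xi):=h(u(\xi))$ the pair solves $\eqref{e:sp}_1$ by construction, and $(u',h')(\pm\infty)=(0,0)$.

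It remains to check $\eqref{e:sp}_2$ and the booleans $\eqref{e:twID}$, and here I expect the real work. The crucial preliminary is that $s<0$: because $u_+<u^D(h_-)$ strictly and $v^D$, $v^S(\cdot,h_-)$ are increasing, $v_+=v^D(u_+)<v^D(u^D(h_-))=v^S(u^D(h_-),h_-)\le v^S(u_-,h_-)=v_-$, so $v_+-v_-<0$ and, with $u_+-u_-<0$, $s<0$ (in particular the shock is genuinely moving, so there is no overlap with the stationary case). On the portion of the profile with $u(\xi)\in(u^D(h_-),u_-)$ one has $h\equiv h_-$ with $u^D(h_-)<u<u^A(h_-)$, so $A$ and $D$ are false and $\eqref{e:sp}_2$ reduces to $sh'=0$, true. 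On the portion with $u(\xi)\in(u_+,u^D(h_-))$ one has $u=u^D(h)$ and $u'<0$, hence $su'>0$ and $D$ is true, while $A$ is false because $u\neq u^A(h^D(u))$ away from $u_c$; thus $\eqref{e:sp}_2$ becomes $sh'=s\bigl(h^D(u)\bigr)'$, i.e. $h'=\bigl(h^D(u)\bigr)'$, which holds by the definition of $h(u)$. Continuity of $(u,h)(\xi)$ at the junction $u=u^D(h_-)$ patches the two pieces, so $(u,h)(\xi)$ is the claimed traveling wave. The main obstacle is exactly this consistency check — ensuring that the branch of $v$ selected by $\eqref{generalV(u)}$ and the switches $A,D$ agree all along the orbit — and it rests on the sign computation $s<0$ and on the corner of $v(\cdot,h_-)$ sitting precisely at $u^D(h_-)$, where the two pieces of $h(u)$ are joined.
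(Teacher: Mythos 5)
Your proposal is correct and follows essentially the same route as the paper: the same gluing $h(u)$ (equal to $h_-$ on $[u^D(h_-),u_-]$ and to $h^D(u)$ on $[u_+,u^D(h_-)]$), the same reduction of $\eqref{e:sp}_1$ to a scalar autonomous ODE whose orbit decreases from $u_-$ to $u_+$ by the chord condition, and the same patching argument for $\eqref{e:sp}_2$. The only difference is that you spell out the sign check $s<0$ and the verification of the booleans $A$, $D$, which the paper simply defers to the proof of Theorem~\ref{t:scz1}.
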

\begin{proof}
We refer to Figure \ref{f:acc-dec-shock}{\em (b)}. Because of Assumption (H), the conditions $h_+<h_-$ and $u_+ =u^D(h_+)$ imply $u_+ < u^D(h_-)\le u_-$.
The system \eqref{e:sp} becomes
\begin{equation}\label{e:spp}
\left\{
\begin{array}{l}
u' = (u-u_\pm)\ds\left(-s-\frac{v(u, h)-v_\pm}{u-u_\pm}\right),
\\
h'=\chi(A)\left(h^A(u)\right)' + \chi(D)\left(h^D(u)\right)'.
\end{array}
\right.
\end{equation}
We define
\begin{equation}\label{e:htr}
h(u) =
\left\{
\begin{array}{ll}
h_- &\hbox{ if }u\in\left[u^D(h_-),u_-\right],
\\[2mm]
h^D(u) &\hbox{ if }u\in\left[u_+,u^D(h_-)\right],
\end{array}
\right.
\end{equation}
and plug it into \eqref{e:spp}. Then \eqref{e:spp}$_1$ is an equation with the only unknown $u$.
It has a trajectory leaving $u_-$ and entering into the $\{u<u_-\}$ side, which keeps decreasing due to the  chord condition. This trajectory will enter the equilibrium point $u_+$.
The proof that $\left(u, h(u)\right)(\xi)$ also satisfies \eqref{e:spp}$_2$ is as in the proof of
\thmref{t:scz1}.
\end{proof}

\begin{remark}
	The speed of stationary shock is $0$. All other waves listed in this section have negative speed $s<0$ when their corresponding sufficient conditions are satisfied.
\end{remark}
\section{Riemann solvers}\label{s:Riemann-solvers}
In this section we find solutions of system
\eqref{system2h} for Riemann initial data
\begin{equation}\label{e:RData}
(u, h)(x, 0)=
\left\{
\begin{array}{ll}
(u_-, h_-) &\hbox{ if } x<0,
\\[2mm]
(u_+, h_+) &\hbox{ if } x>0.
\end{array}
\right.
\end{equation}
We call such solutions Riemann solvers. We use for brevity the following notations: S denotes shock waves, R rarefaction waves, ST stationary waves, ScS scanning shock and ScR scanning rarefaction waves, AR acceleration rarefaction waves, DS deceleration shocks, ScDS scanning-to-deceleration and ScAS scanning-to-acceleration shocks. We denote $v_\pm=v(u_\pm,h_\pm)$ and describe solutions in the $(u,v)$-plane, instead of using the $(u,h)$-plane of the states, because this is more intuitive, and by assumption (H) the two approaches are clearly equivalent.


\subsection{Case 1. $u_-\ge u_c$}
In this case the pair $(u_-, h_-)\in\Omega$ is in free zone and $h_-$ is the $h$-value of the scanning curve passing through $(u_-,v_-)$ in the $(u,v)$-plane. The Riemann solver is of the following forms; we refer to Figure \ref{f:fromfreezone}.

\begin{enumerate}[{\em (i)}]
\item If $v_+ \ge v_-$, then
\[
(u_-,v_-) \xrightarrow{\text{ScR}}
\left(u^A(h_-), v^A(u^A(h_-))\right) \xrightarrow{\text{AR}}
\left(u_m, v_+= v^A(u_m)\right) \xrightarrow{\text{ST}}
(u_+,v_+).
\]
Some waves may be missing in the above scheme; for instance, only the first and third wave are needed if $v\le v^S\left(u^A(h_-),h_-\right)$.

\item If $v_+< v_-$, then

\begin{enumerate}[{\em (a)}]

\item either
\[
(u_-,v_-) \xrightarrow{\text{ScS}}
\left(u^D(h_-), v^D(u^D(h_-))\right) \xrightarrow{\text{DS}}
\left(u_m, v_+=v^D(u_m)\right) \xrightarrow{\text{ST}}
(u_+,v_+),
\]
if there exists $\left(u_m, v_+=v^D(u_m)\right)$ such that the speed of the deceleration shock DS is less then that of the scanning wave. Since all velocities are negative, the lesser the velocity the steeper is the chord joining the states.

\item or, if the previous condition is not met, then
\[
(u_-,v_-) \xrightarrow{\text{ScDS}} \left(u_m, v_+=v^D(u_m)\right) \xrightarrow{\text{ST}}
(u_+,v_+).
\]
\end{enumerate}
\end{enumerate}


\begin{figure}[htbp]
\begin{tikzpicture}[>=stealth, scale=0.9]


\draw[->] (0,0) -- (12,0) node[below=0.2cm, left] {$u$} coordinate (x axis);
\draw[->] (0,0) -- (0,8) node[left] {$v$} coordinate (y axis);

\draw (0,7.5) node[left] {$\bar v$} --  (11,7.5);

\draw[thick] (1,0) node[below]{$1$} node[above=0.5cm, left]{$v^D$} .. controls (1.5,3) and (3,5.5) .. (11,6.5) node[right]{$v^D$};

\draw[thick] (11,7.3) node[right]{$v^A$} .. controls (5,7)  and (4,7.1) .. (3.5,0) node[below=0.3cm, left=-0.2cm]{$u_0^A$} node[above=0.5cm, left]{$v^A$};

\filldraw[fill=gray!20!white, draw=black, nearly transparent] (1,0) .. controls (1.5,3) and (3,5.5) ..  (11,6.5) -- (11,7.3) .. controls (5,7) and (4,7.1) .. (3.5,0) -- cycle;

\draw[dotted, thick] (4.22,0) node[below]{$u_c$} -- (4.22,4.65);
\draw[dotted,thick] (0,4.65) node[left]{$v_c$} -- (4.22,4.65);

\draw[thick] (5.33,5.2) .. controls (6.8,6.4) and (6.8,6.5) ..  (8,7.1);
\draw[dotted, thick] (5.33,0) node[below]{$u^D(h_-)$} -- (5.33,5.2);
\draw[dotted, thick] (8,0) node[below]{$u^A(h_-)$} -- (8,7.1);
\draw[dotted, thick] (5.33,5.2) -- (1.65,2.15); 
\draw[->] (10,5) node[below]{$v=v^S(u,h_-)$} -- (6.9,6.3);
\fill (6.6,6.25) circle (2pt);
\draw[thick,dotted] (6.6,0) node[below]{$u_-$}-- (6.6,6.25);
\draw[thick,dotted] (0,6.25) node[left]{$v_-$}-- (6.6,6.25);

\draw[thick,->] (6.95,6.5) -- (7.07,6.6); 
\path [draw=black,thick,postaction={on each segment={mid arrow=black,thick}}]
(7.25,6.69) -- (8.3,6.69);
\draw (8.3,6.69) circle (2pt);

\draw[thick,->] (8.5,7.16) -- (8.55,7.17); 
\path [draw=black,thick,postaction={on each segment={mid arrow=black,thick}}]
(8.8,7.18) -- (10.5,7.18);
\draw (10.5,7.18) circle (2pt);

\draw[thick,->] (6.25,5.95) -- (6.20,5.9); 
\path [draw=black,thick,postaction={on each segment={mid arrow=black,thick}}]
(5.95,5.7) -- (5.1,5.7);
\draw (5.1,5.7) circle (2pt);

\path [draw=black,thick,postaction={on each segment={mid arrow=black,thick}}]
(5.33,5.2) -- (2.6,3.5) 
(2.6,3.5) -- (3.6,3.5);
\draw (3.6,3.5) circle (2pt);

\path [draw=black,thick,postaction={on each segment={mid arrow=black,thick}}]
(6.6,6.25) -- (1.05,0.3) 
(1.05,0.3) -- (2.5,0.3); 
\draw (2.5,0.3) circle (2pt);

\end{tikzpicture}
\caption{{Riemann solutions in Case 1. The state $(u_-,v_-)$ is represented by a full circle; various states $(u_+,v_+)$ are represented by empty circles. The oblique dotted line, whose slope is $v_u^S\left(u^D(h_-),h_-\right)$, separates case {\em (ii)(a)}, which occurs above that line, from case {\em (ii)(b)}, which occurs below.}}
\label{f:fromfreezone}
\end{figure}
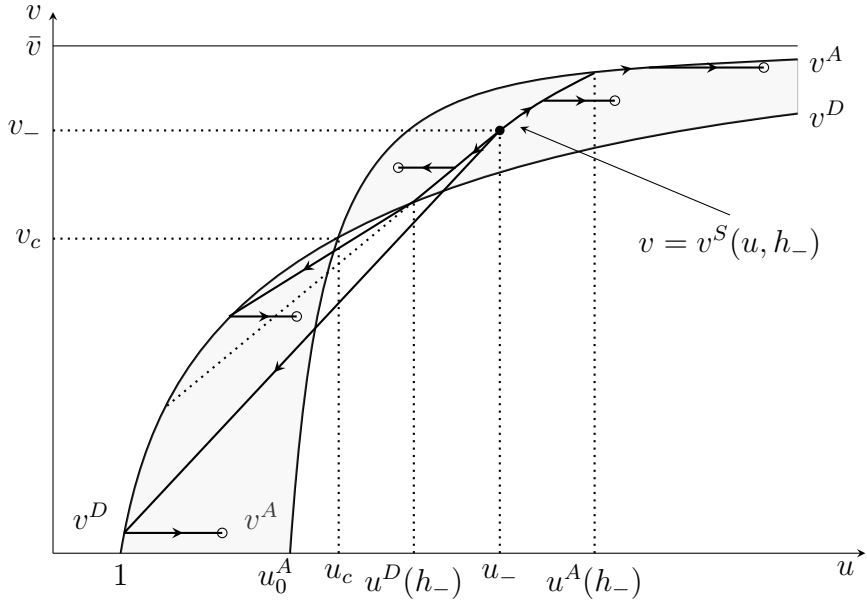

\subsection{Case 2. $u_- < u_c$} In this case $u_-$ is in congested zone. As in the previous case, solutions are classified depending on $v_+$. We refer to Figure \ref{f:fromcongestedzone}.


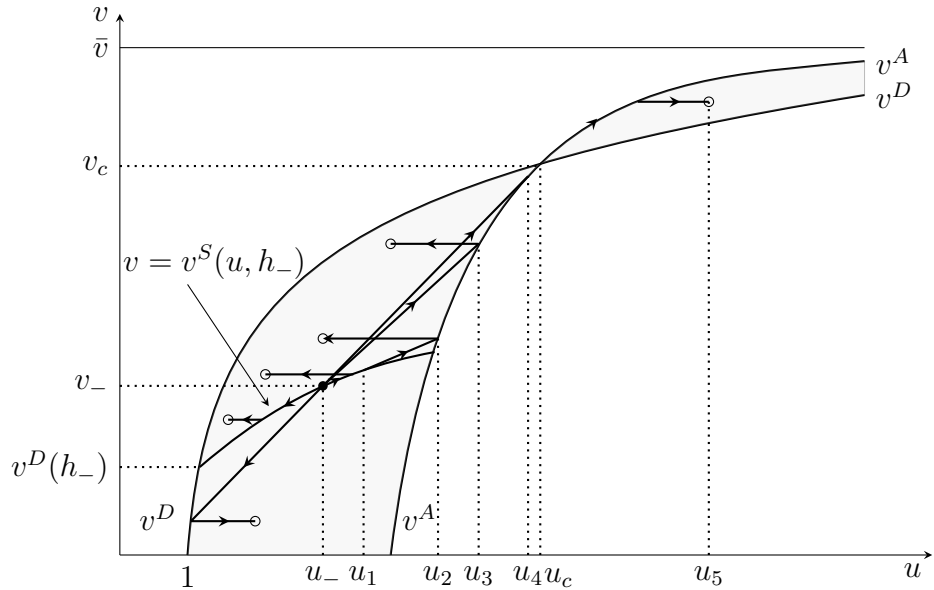
\begin{figure}[htbp]
\begin{tikzpicture}[>=stealth, scale=0.9]


\draw[->] (0,0) -- (12,0) node[below=0.2cm, left] {$u$} coordinate (x axis);
\draw[->] (0,0) -- (0,8) node[left] {$v$} coordinate (y axis);

\draw (0,7.5) node[left] {$\bar v$} --  (11,7.5);

\draw[thick] (1,0) node[below]{$1$} node[above=0.5cm, left]{$v^D$} .. controls (1.3,4) and (3,5.5) .. (11,6.8) node[right]{$v^D$};

\draw[thick] (11,7.3) node[right]{$v^A$} .. controls (8,7)  and (4.9,7.1) .. (4,0) node[above=0.5cm, right]{$v^A$};

\filldraw[fill=gray!20!white, draw=black, nearly transparent] (1,0) .. controls (1.3,4) and (3,5.5) ..  (11,6.8) -- (11,7.3) .. controls (8,7) and (4.9,7.1) .. (4,0) -- cycle;

\draw[dotted, thick] (6.21,0) node[below=0.3cm,right=-0.1cm]{$u_c$} -- (6.21,5.75);
\draw[dotted,thick] (0,5.75) node[left]{$v_c$} -- (6.21,5.75);

\fill (3,2.5) circle (2pt);
\draw[dotted, thick] (3,0) node[below]{$u_-$} -- (3,2.5);
\draw[dotted,thick] (0,2.5) node[left]{$v_-$} -- (3,2.5);

\draw[->] (1,4) node[above=0.3cm, right=-1cm]{$v=v^S(u,h_-)$} -- (2.2,2.2);
\draw[thick] (1.18,1.3) .. controls (2,2) and (3,2.7) ..  (4.65,3);
\draw[thick,dotted] (0,1.3) node[left]{$v^D(h_-)$} -- (1.18,1.3);

\draw[thick,->] (2.5,2.24) -- (2.42,2.19); 
\path [draw=black,thick,postaction={on each segment={mid arrow=black,thick}}]
(2.1,2) -- (1.6,2);
\draw (1.6,2) circle (2pt);

\path [draw=black,thick,postaction={on each segment={mid arrow=black,thick}}]
(3,2.5) -- (1.05,0.5)
(1.05,0.5) -- (2,0.5);
\draw (2,0.5) circle (2pt);

\draw[thick,->] (3.2,2.59) -- (3.25,2.62); 
\path [draw=black,thick,postaction={on each segment={mid arrow=black,thick}}]
(3.45,2.67) -- (2.15,2.67);
\draw (2.15,2.67) circle (2pt);

\path [draw=black,thick,postaction={on each segment={mid arrow=black,thick}}]
(3.6,2.73) -- (4.7,3.2);
\draw[thick,dotted] (3.6,0) node[below]{$u_1$} -- (3.6,2.73);
\draw[thick,dotted] (4.7,0) node[below]{$u_2$} -- (4.7,3.2);
\draw[thick, ->] (4.7,3.2) -- (3,3.2);
\draw (3,3.2) circle (2pt);

\path [draw=black,thick,postaction={on each segment={mid arrow=black,thick}}]
(3,2.5) -- (5.3,4.6)
(5.3,4.6) -- (4,4.6);
\draw[thick,dotted] (5.3,0) node[below]{$u_3$} -- (5.3,4.6);
\draw (4,4.6) circle (2pt);

\draw[thick] (3,2.5) -- (6.03,5.6);
\draw[thick,->] (5.15,4.7) -- (5.25,4.8); 
\draw[thick,dotted] (6.03,0) node[below]{$u_4$} -- (6.03,5.6);
\draw[thick,->] (7,6.4) -- (7.05,6.45); 
\path [draw=black,thick,postaction={on each segment={mid arrow=black,thick}}]
(7.65,6.7) -- (8.7,6.7) ;
\draw[thick,dotted] (8.7,0) node[below]{$u_5$} -- (8.7,6.7);
\draw (8.7,6.7) circle (2pt);

\end{tikzpicture}
\caption{{Riemann solutions in Case 2. Notations are as in Figure \ref{f:fromfreezone}.}}
\label{f:fromcongestedzone}
\end{figure}

\begin{enumerate}[{\em (i)}]
\item If $v_+\le v^D(h_-)$, then
\[
(u_-,v_-) \xrightarrow{\text{ScDS}} \left(u_m, v_+ = v^D( u_m)\right)
\xrightarrow{\text{ST}} (u_+, v_+).
\]

\item If $v^D(h_-) \le v_+ \le v_-$, then
\[
(u_-,v_-) \xrightarrow{\text{ScS}} \left(u_m, v_+=v^S(u_m, h_-)\right)
\xrightarrow{\text{ST}} (u_+, v_+).
\]

\item
If $v_+> v_- $, then the solution is one of the following, in order of increasing $v_+$:

\begin{enumerate}[{\em (a)}]

\item either
\[
(u_-,v_-) \xrightarrow{\text{ScR}} \left(u_m, v_+ = v^S(u_m, h_-)\right)
\xrightarrow{\text{ST}} (u_+, v_+);
\]

\item or
\[
(u_-,v_-) \xrightarrow{\text{ScR}} \left(u_1, v_1 = v^S( u_1, h_-)\right)
\xrightarrow{\text{ScAS}}  \left(u_2, v^A( u_2)=v_+\right)
\xrightarrow{\text{ST}} (u_+, v_+),
\]
where the the chord connecting $(u_1, v_1)$ and $(u_2, v_2)$ is tangent to
the curve $v=v^S(u, h_-)$ at $u=u_1$;

\item or
\[
(u_-,v_-)
\xrightarrow{\text{ScAS}}  \left(u_3, v^A( u_3)=v_+\right)
\xrightarrow{\text{ST}} (u_+, v_+);
\]

\item or, at last,
\[
(u_-,v_-)
\xrightarrow{\text{ScAS}}  \left(u_4, v^A( u_4)\right)
\xrightarrow{\text{AR}} \left(u_5, v_+= v^A(u_5)\right)
\xrightarrow{\text{ST}} (u_+, v_+),
\]
where the chord connecting $(u_-, v_-)$ and $\left(u_4, v^A( u_4)\right)$ is tangent to the curve
$v=v^A(u)$ at $u=u_4$.
\end{enumerate}
\end{enumerate}

\noindent This concludes the analysis of the possible Riemann solutions.

\smallskip

Stationary shocks, their existence and frequency of appearing in Riemann solvers listed above, seem strange.
They are not present in classical macroscopic models
such as the LWR, 
though they may occur in second-order models as Aw-Rascle-Zhang \cite{Aw-Rascle, Zhang2002}.
Here, we give an intuitive explanation for the existence of stationary shocks as the result of rational drivers' behavior.

Consider a car train traveling in the congested zone $u<u_c$ with $(u_\pm, v_\pm)\in\Omega$ and $v_-=v_+$ in the initial data \eqref{e:RData}. The states $(u_\pm, v_\pm)$ are connected by a stationary shock at $x=0\pm$, i.e. a shock traveling with the
cars. Note that, in Lagrangian coordinates, each car has a fixed $x$-coordinate.
For a rational driver in the platoon, she sees no reason to increase the speed because, if she does, then the front car traveling at $v_\pm$ will force her to brake soon with less safe and less comfortable spacing in front of her, wasting her effort and fuel. On the other hand, she has no incentive to drive slower either, unless she wants to increase the spacing in front of her which invites cars in other lanes to cut in.
Therefore, the stationary shock persists.

Why do stationary shocks appear so often in Riemann solvers listed above?
Are there other Riemann solvers for the same initial data?
Indeed, for many initial data \eqref{e:RData}, there are infinitely many Riemann solvers, and some of them do not use stationary shocks. However, we claim that other solutions are a waste of drivers' effort and fuel without much to gain  for both the driver initiating
those Riemann solvers and those drivers behind.
To see this, consider the initial condition \eqref{e:RData} with $(u_\pm,v_\pm)$  as in Figure \ref{f:SNGwaves}.
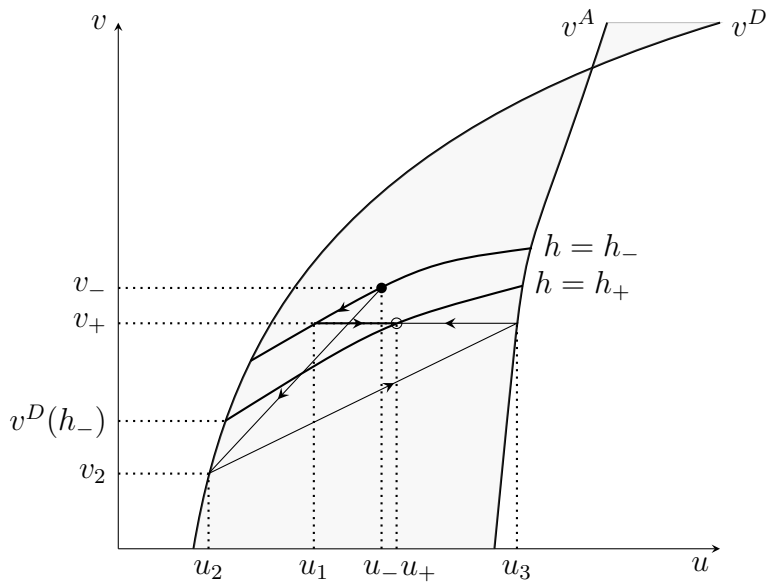
\begin{figure}[htbp]
\begin{tikzpicture}[>=stealth], scale=0.7]


\draw[->] (0,0) -- (8,0) node[below=0.2cm, left] {$u$} coordinate (x axis);
\draw[->] (0,0) -- (0,7) node[left] {$v$} coordinate (y axis);

\draw[thick] (1,0) .. controls (1.5,3) and (3,5.5) .. (8,7) node[right]{$v^D$};

\draw[thick] (6.5,7) node[left]{$v^A$} .. controls (5.2,3)  and (5.5,5) .. (5,0);

\filldraw[fill=gray!20!white, draw=black, nearly transparent] (1,0) .. controls (1.5,3) and (3,5.5) ..  (8,7) -- (6.5,7) .. controls (5.2,3) and (5.5,5) .. (5,0) -- cycle;

\draw[thick] (1.76,2.5)  .. controls (4,3.8) .. (5.5,4) node[right]{$h=h_-$};

\draw[thick] (1.42,1.7)  .. controls (3.5,3) .. (5.38,3.5) node[right]{$h=h_+$};
\draw[thick, dotted] (0,1.7) node[left]{$v^D(h_-)$}-- (1.42,1.7);

\fill (3.5,3.47) circle (2pt);
\draw[thick, dotted] (3.5,0) node[below]{$u_-$}-- (3.5,3.47);
\draw[thick, dotted] (0,3.47) node[left]{$v_-$}-- (3.5,3.47);
\draw[thick,->] (3,3.2) -- (2.92,3.15); 
\path [draw=black,thick,postaction={on each segment={mid arrow=black,thick}}]
 (2.6,3) -- (3.7,3);
\draw[thick, dotted] (2.6,0) node[below]{$u_1$}-- (2.6,3);
\draw (3.7,3) circle (2pt); 
\draw[thick, dotted] (3.7,0) node[below=0.3cm,right=-0.1cm]{$u_+$}-- (3.7,3);
\draw[thick, dotted] (0,3) node[left]{$v_+$}-- (3.7,3);

\path [draw=black,postaction={on each segment={mid arrow=black,thick}}]
(3.5,3.47) -- (1.2,1) -- (5.3,3) -- (3.7,3);
\draw[thick, dotted] (5.3,0) node[below]{$u_3$}-- (5.3,3);
\draw[thick, dotted] (1.2,0) node[below]{$u_2$} -- (1.2,1);
\draw[thick, dotted] (0,1) node[left]{$v_2$} -- (1.2,1);
\end{tikzpicture}
\caption{{Two different solutions for the same initial data. The solution proposed in Case 2{\em (ii)} is depicted with thick lines, the other one with thin lines.}}
\label{f:SNGwaves}
\end{figure}
Because $v_- > v_+$, the driver of the first car at the $\{x<0\}$ part will have to decelerate.
The Riemann solver listed in Case 2{\em (ii)} for this initial data is
\begin{equation}\label{RSolver1}
(u_-,v_-)
\xrightarrow{\text{ScS}}  \left(u_1, v_+\right)
\xrightarrow{\text{ST}} (u_+, v_+),
\end{equation}
which says that the driver first slows down to state $(u_1, v_+)$ on the scanning curve
$h=h_-$ to speed $v_+$, the same as the car in front of him.
Once the speed is slowed down to $v_+$, there is no incentive for him to slow down more, and hence he will keep the speed $v_+$ afterwards,
resulting in the stationary shock at $x=0$.
This rational driver's behavior is the reason for stationary shocks appearing in Riemann solvers.

However, as long as the scanning curves' slopes are relatively small,
there are also infinitely many other Riemann solvers of the form
\begin{equation}\label{RSolver2}
(u_-,v_-)
\xrightarrow{\text{ScDS}}  \left(u_2, v_2\right)
\xrightarrow{\text{ScR or ScAS}} (u_3, v_+)
\xrightarrow{\text{ST}} (u_+, v_+),
\end{equation}
as illustrated in Figure \ref{f:SNGwaves} in the case the second wave is a scanning-to-acceleration shock.
This solution form corresponds to the situation where the driver of the first car at $x=0-$
overdecelerates to speed $v_2<v_+$, possibly due to applying brake late or to other random reasons, forcing cars behind him also to decelerate to the velocity $v_2$ and with a denser spacing $u_2$. Because $v_2<v_+$, the spacing in front of the first car at $x=0-$ improves as $t$ increases. This induces the driver to speed up to $v_+$, either through a scanning rarefaction wave, or a scanning to acceleration shock, and then keep the speed $v_+$ with a more comfortable spacing $u_3$.
Both solution types \eqref{RSolver1} and \eqref{RSolver2} are possible, depending on the driver's actions.
Comparing with  \eqref{RSolver1}, solutions of shape \eqref{RSolver2} certainly require more driver's effort and more cost in fuel and car maintenance, without any saving in travel time. If the driver for the car at $x=0-$ is rational, he should act according to \eqref{RSolver1}. However, there is nothing stopping him to act like \eqref{RSolver2}, either by error or intention.

\section{Stop-and-go patterns and other solution examples}\label{s:examples}

In this section we show, by examples, solutions of initial value problems of the system \eqref{system2h} to see its capability of exhibiting several well-known traffic flow phenomena, emphasizing on those where LWR model fails. 
Solutions are constructed using Riemann solvers constructed in Section \ref{s:Riemann-solvers}, unless otherwise stated. Such solutions provide us bases to compare with experimental observations for validation purposes, and to point out the directions for improving the model later. To simplify notations, in the following we often identify states $(u_1, v_1)$, $(u_2, v_2)$ with circled numbers as $\circled{1}$, $\circled{2}$ and so on.

\begin{example}[Car train with stationary but varying spacing]\label{carTrain}
A car train is defined as a platoon of cars traveling at the same constant speed $v(x, t) \equiv v_0$, but the spacing $u(x, t) = u_0(x)$ does not have to be uniform in $x$. This is the result of different driving tastes (or hysteresis moods) for spacing.
It is easy to see that $(u, v)(x, t) = (u_0(x), v_0)$ is a solution of \eqref{system2h}, as long as 
$(u_0(x), v_0)\in \Omega$ for all $x\in \R$.
In contrast, classical macroscopic models, such as the LWR model, cannot have this kind of solution; 
ARZ model \cite{Aw-Rascle, Zhang2002} allows predetermined drivers' preferences for spacing, which do not change according to drivers' history.

\end{example}

\begin{example}[Stop-and-go solutions]\label{stopAndGo}
	Pick any two points $\circled{1} = (u_1,h_1)$ and $\circled{2} = (u_2,h_2)$ shown in Figure \ref{f:SNGS} on the left, such that $u_1 = u^D(h_1)$,
	$u_2 = u^A(h_2)$, and both chord conditions
	\eqref{chord_S2A} and \eqref{chord_S2D} are assumed to hold. 	This, according to Remark \ref{S2D and S2A}, can happen only in congested zone.


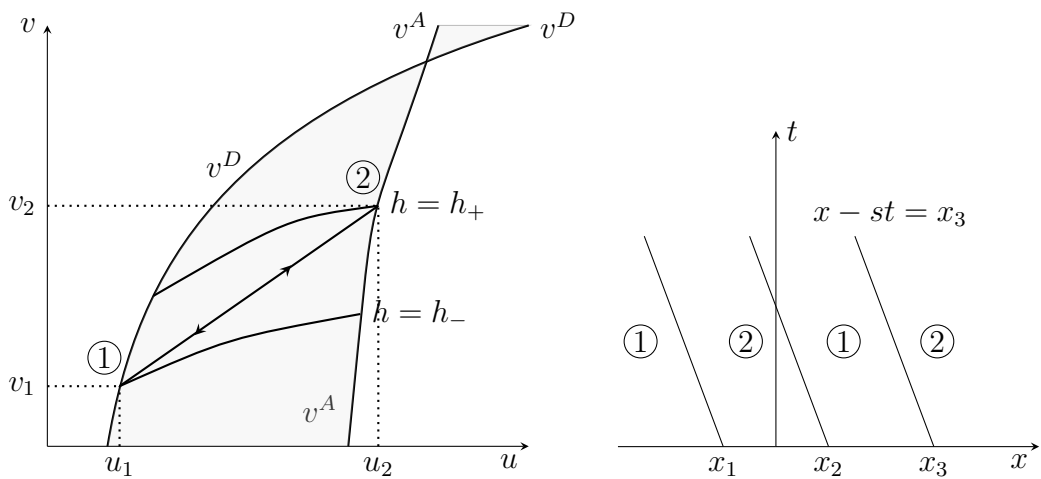
\begin{figure}[htbp]
\begin{tabular}{cc}
\begin{tikzpicture}[>=stealth, scale=0.8]

\draw[->] (0,0) -- (8,0) node[below=0.2cm, left] {$u$} coordinate (x axis);
\draw[->] (0,0) -- (0,7) node[left] {$v$} coordinate (y axis);

\draw[thick] (1,0)  .. controls (1.5,3) and (3,5.5) .. node[above=0.5cm,left=-0.5cm]{$v^D$} (8,7) node[right]{$v^D$};

\draw[thick] (6.5,7) node[left]{$v^A$} .. controls (5.2,3)  and (5.5,5) .. (5,0) node[above=0.5cm, left]{$v^A$};

\filldraw[fill=gray!20!white, draw=black, nearly transparent] (1,0) .. controls (1.5,3) and (3,5.5) ..  (8,7) -- (6.5,7) .. controls (5.2,3) and (5.5,5) .. (5,0) -- cycle;

\draw[thick] (1.76,2.5)  .. controls (4,3.8) .. (5.5,4) node[left=0.2cm,above]{$\circled{2}$} node[right]{$h=h_+$};
\draw[thick,dotted] (5.5,0) node[below]{$u_2$}-- (5.5,4);
\draw[thick,dotted] (0,4) node[left]{$v_2$}-- (5.5,4);
\draw[thick] (1.2,1) node[left=0.2cm,above]{$\circled{1}$} .. controls (3,1.8) .. (5.2,2.2) node[right]{$h=h_-$};
\draw[thick,dotted] (1.2,0) node[below]{$u_1$}-- (1.2,1);
\draw[thick,dotted] (0,1) node[left]{$v_1$}-- (1.2,1);
\draw[thick] (1.2,1) -- (5.5,4);
\draw[thick,->] (2.5,1.9) -- (2.45,1.85); 
\draw[thick,->] (4,2.96) -- (4.05,3.01); 
\end{tikzpicture}

&
\begin{tikzpicture}[>=stealth, scale=0.7]

\draw[->] (0,0) -- (5,0) node[below=0.2cm, left] {$x$} coordinate (x axis);
\draw (0,0) -- (-3,0);
\draw[->] (0,0) -- (0,6) node[right] {$t$} coordinate (y axis);

\draw (3,0) node[below]{$x_3$}  -- node[midway,left=0.3cm]{$\circled{1}$}
node[midway,right=0.2cm]{$\circled{2}$}
(1.5,4) node[above=0.3cm,right=-0.7cm]{$x-st=x_3$};

\draw (1,0) node[below]{$x_2$} -- node[midway,left=0.2cm]{$\circled{2}$} (-0.5,4) node[above=1cm,right=0.5cm]{};
\draw (-1,0) node[below]{$x_1$} -- node[midway,left=0.2cm]{${\circled{1}}$} (-2.5,4) node[above=0.3cm,left=-1.8cm]{};

\end{tikzpicture}
\end{tabular}
\caption{{Stop-and-go solutions: left, in the $(u,v)$-plane; right, in the $(x,t)$-plane. }}
\label{f:SNGS}
\end{figure}
	
Theorem \ref{t:scz1} states that there is a shock wave from $(u_-,v_-)=(u_1,v_1)$ to $(u_+,v_+)=(u_2,v_2)$. On the other hand, Theorem \ref{t:scz2} says that there is another shock wave connecting $(u_-,v_-)=(u_2,v_2)$ to $(u_+,v_+)=(u_1,v_1)$. Furthermore, these two shock waves have the same speed $s<0$. This gives us stop-and-go solutions. For instance, the function $(u,v)=\left(u(x,t),v(x,t)\right)$ shown in Figure \ref{f:SNGS} on the right,
	is a solution of the inviscid system \eqref{system2h}. The speeds of cars at any given time $t$ alternate as $x$ increases, from slow (with velocity $v_1$) to fast (with velocity $v_2$). Furthermore, all boundaries of speed alternation travel at the same speed $s<0$ so that the length of each speed zone does not change.

In \cite{Yuan-Knoop-Hoogendoorn}, the authors observed empirically stop-and-go waves in actual traffic, and found that the speed in congestion ($v_1$) and that at the outflow of congestion ($v_2$) can range broadly, from 6.3 to 48.7 km/h for $v_1$ and from 29.3 to 61.2 km/h  for $v_2$.
	The solution construction shown above indeed allows wider range of choices for $v_1$ and $v_2$ as long as they differ sufficiently in the congested zone.

We emphasize that the construction of such stop-and-go solutions is not possible if either $u_1$ or $u_2$ are in free zone $\{u>u_c\}$.
Thus, the model \eqref{system2h} predicts that there is no stop-and-go waves in free zone.
\end{example}

\begin{example}[Sufficient speed variation in a car train in congested zone caused by a temporary moving bottleneck can create stop-and-go waves]\label{ex:temp-bottleneck}
	Consider a car train with constant spacing and speed $(\bar u, \bar{v})$ in congested zone. Assume $(\bar u, \bar{v})$
	is in the interior of $\Omega$, as shown in Figure \ref{f:Bottleneck} on the left.	
\begin{figure}[htbp]
\begin{tabular}{cc}
\begin{tikzpicture}[>=stealth, scale=0.8]

\draw[->] (0,0) -- (8,0) node[below=0.2cm, left] {$u$} coordinate (x axis);
\draw[->] (0,0) -- (0,7) node[left] {$v$} coordinate (y axis);

\draw[thick] (1,0) node[above=0.5cm, left]{$v^D$} .. controls (1.5,3) and (3,5.5) .. (8,7) node[right]{$v^D$};

\draw[thick] (6.5,7) node[left]{$v^A$} .. controls (5.2,3)  and (5.5,5) .. (5,0) node[above=0.5cm, left]{$v^A$};

\filldraw[fill=gray!20!white, draw=black, nearly transparent] (1,0) .. controls (1.5,3) and (3,5.5) ..  (8,7) -- (6.5,7) .. controls (5.2,3) and (5.5,5) .. (5,0) -- cycle;

\draw[thick] (2.2,3.3)  .. controls (4,3.8) .. (5.5,4) node[above=0.1cm,right=0.1cm]{$h=\bar h$} node[below=0.3cm,right=-0.1cm]{$\circled{4}$};

\draw[thick] (1.42,1.7)  .. controls (3.5,2.4) .. (5.25,2.7) node[right]{$h=h_1$};

\path [draw=black,thick,postaction={on each segment={mid arrow=black,thick}}]
(3.8,3.72) --  (1.75,2.45) node[left=0.2cm,above=-0.1cm]{$\circled{3}$}-- (3.8,2.45);
\fill (3.8,3.72) circle (2pt) node[above]{$\circled{1}$};
\draw[thick,dotted] (3.8,0) node[below]{$\bar u$}-- (3.8,3.72);
\draw[thick,dotted] (0,3.72) node[left]{$\bar{v}$}-- (3.8,3.72);
\draw[thick,dotted] (1.75,0) node[below]{$u_2$}-- (1.75,2.45);
\draw[thick,dotted] (0,2.45) node[left]{$v_1$}-- (1.75,2.45);

\path [draw=black,thick,postaction={on each segment={mid arrow=black,thick}}]
(3.8,2.45) -- (5.4,3.72) -- (3.8,3.72);
\fill (3.8,2.45) circle (2pt) node[below=0.3cm,right=0cm]{$\circled{2}$};
\draw[thick,dotted] (5.4,0) node[below]{$u_3$}-- (5.4,3.72);

\path [draw=black,postaction={on each segment={mid arrow=black,thick}}]
(1.75,2.45) -- (5.4,3.72);

\end{tikzpicture}


\begin{tikzpicture}[>=stealth, scale=0.8]


\draw[->] (0,0) node[below]{$0$} -- (2,0) node[below=0.2cm, left] {$x$} coordinate (x axis);
\draw (0,0) -- (-6,0);
\draw[->] (0,0) -- (0,4) node[left] {$t$} coordinate (y axis);

\draw (-3,0) node[left=1.5cm, above=0.1cm]{$\circled{1}$} --  (-6,2.5); 
\draw (-3,0) node[below]{$-1$}-- (-3,3); 
\draw (-3,3) node[left=0.8cm]{$\circled{3}$} node[right=1.3cm, below=0cm]{$\circled{4}$}--  (-3.5,4);  
\draw (0,0) node[left=1.5cm, above=0.1cm]{$\circled{2}$} node[right=1cm, above=0.1cm]{$\circled{1}$}-- (-3,3) ; 

\draw[thick, dotted] (0,3) node[right]{$t_1$} -- (-3,3);

\end{tikzpicture}

\end{tabular}
\caption{{Formation of a stop-and-go pattern. The figure on the left is in the $(u,v)$-plane, where thick lines are waves produced at $t=0$ and thin lines are the shock waves emitted at $t=t_1$. The figure on the right shows the solution in the $(x,t)$-plane.}}
\label{f:Bottleneck}
\end{figure}
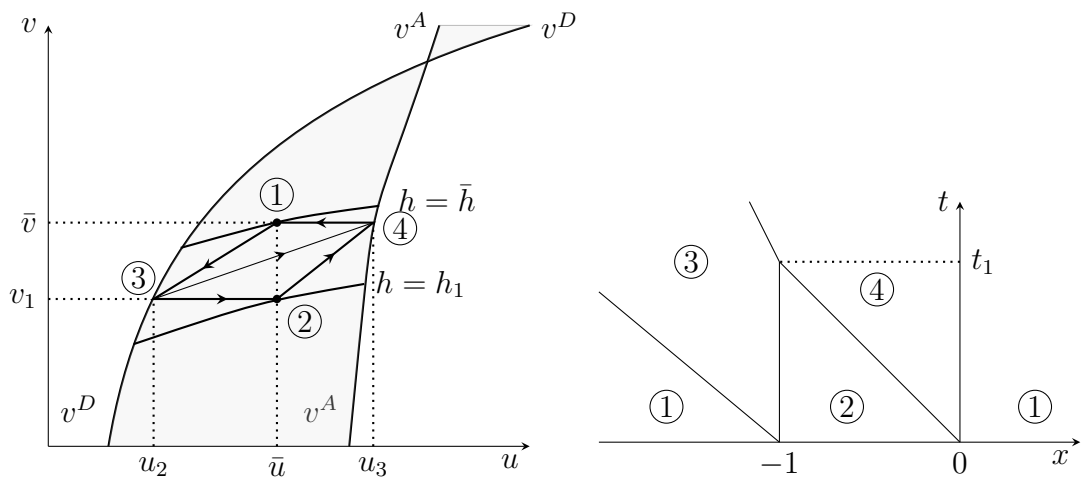
What is the effect if cars in the range $-1<x<0$ brake to a lower speed $v_1< \bar{v}$,  due to a temporary bottleneck at $x=0$?
Such temporary bottleneck can be caused by, for instance, a car insertion at $x=0$
due to lane change, or just because of a driver's random behavior.
To simulate this situation, consider the initial condition
\begin{equation}\label{SnGoInitialData}
(u,v)(x,0)=\left\{
\begin{array}{ll}
	(\bar u, \bar{v}) = \circled{1} & \hbox{ if }x<-1,
	\\
	(\bar u, v_1) = \circled{2} & \hbox{ if } -1\le x \le 0,
	\\
	(\bar u, \bar{v})= \circled{1} & \hbox{ if } x>0,
	\end{array}
	\right.
	\end{equation}
where both $(\bar u,\bar{v})$ and $(\bar u,v_1)$ are in the interior of the scanning region. All the shock connections in Figure \ref{f:Bottleneck} exist if either the scanning curves $h=h_1$ and $h = \bar h$ are sufficiently flat or $v_1$ is low enough with respect to $\bar{v}$. The initial jumps at $x=-1$ and $x=0$ are solved by (see cases 2{\em (ii)} and 2{\em (iii)(c)} in Section \ref{s:Riemann-solvers}) respectively as
	\[
	\circled{1} \xrightarrow{\text{ScDS}} \circled{3}
	\xrightarrow{\text{ST}} \circled{2},
\qquad
	\circled{2} \xrightarrow{\text{ScAS}} \circled{4}
	\xrightarrow{\text{ST}} \circled{1}.
	\]
	At time $t=t_1$, the stationary shock
	$\circled{3}
	\xrightarrow{\text{ST}} \circled{2}$ meets the scanning-to-acceleration shock
	$\circled{2} \xrightarrow{\text{ScAS}} \circled{4}$
	at $x=-1$. This interaction is solved by just one scanning-to-acceleration shock
	$\circled{3} \xrightarrow{\text{ScAS}}\circled{4}$, see Figure \ref{f:Bottleneck} on the right.
	This scanning-to-acceleration shock's speed is slower than that of
	$\circled{1} \xrightarrow{\text{ScDS}} \circled{3}$, and hence there is no wave interaction
	after $t=t_1$. The stop-and-go speed oscillation $\bar{v}\to v_1\to \bar{v}$ is then generated and will  persist as $t$ increases.
	Thus, the model \eqref{system2h} shows that phantom jam can be created in a uniform car train in congested zone by a few cars' random braking that create large enough speed variation.
	
	Similarly, if we let $v_1>\bar{v}$ sufficiently in congested zone, wave patterns similar to those in Figure \ref{f:Bottleneck} still emerge.
\end{example}

\begin{example}[Formation of stop-and-go waves on a closed circular track]\label{circularTrack}
To show that stop-and-go waves can emerge without any boundary influence such as a road bottleneck at a fixed location,
	Sugiyama et al. \cite{Sugiyama2004} did experiments in which cars travel in a closed loop road.
	They showed that car trains uniform at initial time
	developed stop-and-go patterns later, and a jam region  propagated upstream through otherwise faster moving cars.
	Can the model \eqref{system2h} exhibit the formation of such pattern?
	The answer is affirmative.
	To see this, we continue Example \ref{ex:temp-bottleneck} with the modification that the road is a circle. Notice that Lagrange coordinate $x$ is label of cars.  The car platoon on a circular road must satisfy the periodic boundary condition
	\begin{equation}\label{periodicCondition}
	(u, v)(x+L, t) = (u, v)(x, t),
	\end{equation}
	where $L>2$ is the number of cars in the circular lane.
	The initial condition \eqref{SnGoInitialData}
	is modified as
	\begin{equation}\label{SnGoInitialData2}
	(u,v)(x+nL,0) = (u,v)(x,0)=\left\{
	\begin{array}{ll}
	(\bar u, \bar v) = \circled{1} & \hbox{ if } -L/2\le x<-1,
	\\
	(\bar u, v_1) = \circled{2} & \hbox{ if } -1\le x \le 0,
	\\
	(\bar u, \bar v) = \circled{1} & \hbox{ if } 0 < x < L/2,
	\\
	\end{array}
	\right.
	\end{equation}
	for $n=\pm 1, \pm 2, \ldots$.
	Figure \ref{f:Bottleneck} on the right is still correct if $L$ is large enough.
	As $t$ increases, the shock
	$\circled{1} \xrightarrow{\text{ScDS}} \circled{3}$
	will arrive at $x=-L$, equivalent to $x=0$ on the circular lane, at time $t=t_2$ before any other shock arrives at $x=0$.
	Then, at $t=t_2+$, the cars on the circular track are divided into two zones: one is the $\circled{3} = (u_2, v_1)$ zone in which the traffic is denser and slower, the other is the $\circled{4} = (u_3, \bar v)$ zone in which the traffic is sparser and faster. This wave pattern is persistent because there is no more wave interaction after $t_2$.
	
	If $v_1$ is moderately smaller than $\bar v$, then at fixed time right after $t_2+$, the wave pattern in $v$ is, from upstream down, a slowdown shock and a rarefaction speedup wave. These regions travel upstream. As $t$ further increases, more wave interactions will occur. Our numerical simulations using an upwinding scheme show that the wave pattern for $v$ remains, but the variation of $v$ decay as $t$ increases.
	
	From Figure \ref{f:Bottleneck}, it is clear that if the scanning curve slope is small, then it takes less deviation in speed, $\bar v - v_1$, to produce a stop-and-go pattern.
	
\end{example}

\begin{example}[Effect of a few under-speeding cars on an otherwise uniform car train in free zone $u>u_c$ is temporary.]
Example \ref{ex:temp-bottleneck} shows that a large enough speed disturbance in an otherwise uniform car train traveling in congested zone $u<u_c$ can generate a persistent stop-and-go wave. It is then natural to investigate the effect of a large enough temporary bottleneck on an otherwise uniform car train traveling in free zone $u>u_c$.To mimic such situation, consider the initial data shown in Figure \ref{f:Bottleneck-freezone}, representing an otherwise uniform car train traveling with state $\circled{1}$, having a few cars that travels temporarily at a much slower state $\circled{2}$. The solution constructed in Figure \ref{f:Bottleneck-freezone} on the right shows that after some time, all cars travels in either state $\circled{1}$ or $\circled{3}$, all with original fast speed $v_1$. The effect of these few slow cars at initial time is eliminated in a finite time. Notice that the wave from $\circled{2}$ to $\circled{3}$ is composed by a deceleration-to-acceleration shock glued on the left to an acceleration rarefaction wave.
	

\begin{figure}[htbp]
\begin{tabular}{cc}
\begin{tikzpicture}[>=stealth,scale=0.7]


\draw[->] (0,0) -- (12,0) node[below=0.2cm, left] {$u$} coordinate (x axis);
\draw[->] (0,0) -- (0,8) node[left] {$v$} coordinate (y axis);

\draw (0,7.5) node[left] {$\bar v$} --  (11,7.5);

\draw[thick] (1,0) node[above=0.5cm, left]{$v^D$} .. controls (1.5,3) and (3,5.5) .. (11,6.5) node[right]{$v^D$};

\draw[thick] (11,7.3) node[right]{$v^A$} .. controls (5,7)  and (4,7.1) .. (3.5,0) node[above=0.5cm, right]{$v^A$};

\filldraw[fill=gray!20!white, draw=black, nearly transparent] (1,0) .. controls (1.5,3) and (3,5.5) ..  (11,6.5) -- (11,7.3) .. controls (5,7) and (4,7.1) .. (3.5,0) -- cycle;

\path [draw=black,thick,postaction={on each segment={mid arrow=black,thick}}]
(10,6.35) node[below=0.2cm,right=0cm]{$\circled{1}$} -- (2.95,3.8) node[above]{$\circled{2}$}-- (4.97,6);
\draw[thick,->] (5,6) -- (5.05, 6.05);
\draw[thick,dotted] (10,0) node[below]{$u_1$} -- (10,6.35);
\draw[thick,dotted] (0,6.35) node[left]{$v_1$} -- (10,6.35);
\fill (10,6.35) circle (2pt);

\draw[thick,dotted] (2.95,0) node[below]{$u_2$} -- (2.95,3.8);
\draw[thick,dotted] (0,3.8) node[left]{$v_2$} -- (2.95,3.8);
\fill (2.95,3.8) circle (2pt);

\path [draw=black,thick,postaction={on each segment={mid arrow=black,thick}}]
(5.4,6.35) node[above]{$\circled{3}$} -- (10,6.35);
\draw[thick,dotted] (5.4,0) node[below]{$u_3$} -- (5.4,6.35);
\fill (5.4,6.35) circle (2pt);

\end{tikzpicture}
\begin{tikzpicture}[>=stealth, scale=0.7]

\draw[->] (0,0) node[below]{$0$} node[above=0.3cm, right=0cm]{$\circled{1}$} -- (1,0) node[below=0.2cm, left] {$x$} coordinate (x axis);
\draw (0,0) node[above=1cm, left=0.2100cm]{$\circled{3}$} -- (-6,0);
\draw[->] (0,0) -- (0,6) node[left] {$t$} coordinate (y axis);

\draw (0,0) -- (-5,2.6);
\draw (0,0) -- (-5.65,3.3);
\draw (0,0) -- (-6,4);
\draw (-2,0)
node[below]{$-1$}
node[above=0.3cm, right=-0.2cm]{$\circled{2}$}
node[above=0.3cm, left=0.8cm]{$\circled{1}$}
-- (-5,2.6); 

\draw (-5,2.6) .. controls (-5.5,3) .. (-6,4);

\draw (-6,4) -- (-6,5);
\draw[thick,dotted] (0,4) node[right]{$t_1$}-- (-6, 4);
\end{tikzpicture}
\end{tabular}
\caption{A few temporarily slow cars in an otherwise uniform car train in the free zone have no effect on car platoon's speed after time $t_1$.}
\label{f:Bottleneck-freezone}
\end{figure}
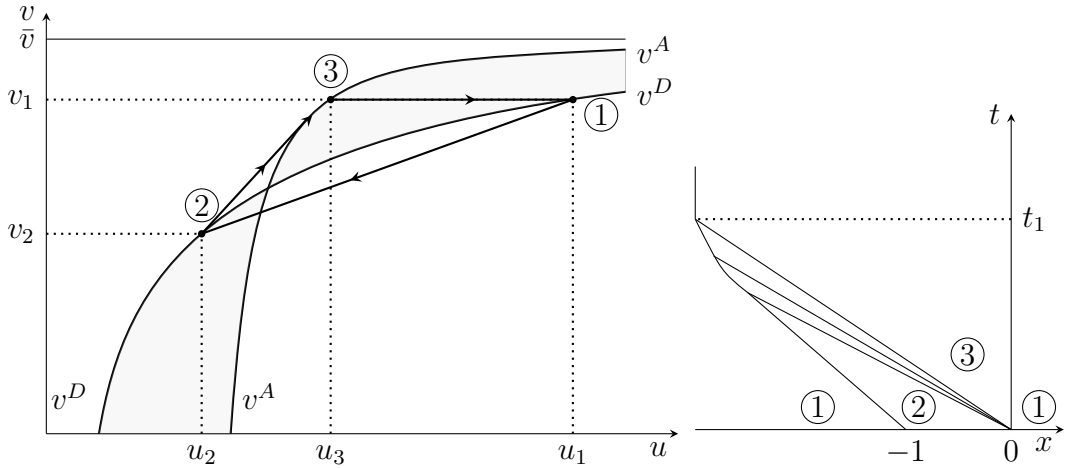

	
\end{example}

\begin{example}[Decay of a stop-and-go wave due to sparser vehicles upstream or downstream]
	Let us continue to consider Example \ref{stopAndGo} and Figure \ref{f:SNGS}.
	Intuitively, the deceleration wave will decay if the car platoon {\em upstream} is sparse enough, so that the upstream state is more ``compressible" and can absorb the deceleration wave. To test whether the model \eqref{system2h} agrees with this observation, we set up the initial data as
	\begin{equation}\label{SnGoDecayInitialData}
	(u,v)(x,0)=\left\{
	\begin{array}{ll}
	(u_3,  v_3) = \circled{3}  & \hbox{ if }x\le -2,
	\\
	(u_1, v_1) = \circled{1}  & \hbox{ if }-2<x<-1,
	\\
	(u_2, v_2) = \circled{2} & \hbox{ if } -1\le x \le 0,
	\\
	(u_1, v_1) = \circled{1} & \hbox{ if } x>0,
	\end{array}
	\right.
	\end{equation}
	where $u_3>> u_2$. For convenience, select $v_3 = v^D(u_3)$, see Figure \ref{f:SNGupdown}. This simulates the situation where there is a stop-and-go wave moving upstream (modeled in the initial data through the sequence $\circled{1}\,\circled{2}\,\circled{1}$) and the traffic upstream is in free zone with spacing $u_3$ sparse enough. In such a case, the slope of the line connecting states $\circled{3}$ and $\circled{4}$ is small enough such that the shock connecting {\circled 1} and {\circled 2} will eventually interact; this eliminates the slow region with slow speed $v_2$ after time $t_3$. The solution of \eqref{system2h} in $(x,t)$-plane is depicted in Figure \ref{f:SNGupdown} on the right.

\begin{figure}[htbp]
\begin{tabular}{cc}
\begin{tikzpicture}[>=stealth,scale=0.7]
\draw[->] (0,0) -- (12,0) node[below=0.2cm, left] {$u$} coordinate (x axis);
\draw[->] (0,0) -- (0,8) node[left] {$v$} coordinate (y axis);

\draw[thick] (1,0) node[above=0.5cm, left]{$v^D$} .. controls (1.3,4) and (3,5.5) .. (11,6.8) node[right]{$v^D$};

\draw[thick] (11,7.3) node[right]{$v^A$} .. controls (8,7)  and (4.9,7.1) .. (4,0) node[above=0.5cm, right]{$v^A$};

\filldraw[fill=gray!20!white, draw=black, nearly transparent] (1,0) .. controls (1.3,4) and (3,5.5) ..  (11,6.8) -- (11,7.3) .. controls (8,7) and (4.9,7.1) .. (4,0) -- cycle;

\draw[thick] (10,6.62) node[below=0.2cm, right=0cm]{\circled{3}}-- (1.1,1); 
\draw[thick,->] (6.05,4.13) -- (6,4.1); 
\draw[thick, dotted] (10,0) node[below]{$u_3$} -- (10,6.62);
\fill (10,6.62) circle (2pt);

\draw[thick, dotted] (0,6.62) node[left]{$v_3$} -- (10,6.62);
\draw[thick, dotted] (1.1,0) node[below]{$u_2$} -- (1.1,1);
\fill (1.1,1) circle (2pt);

\draw[thick, dotted] (0,1) node[left]{$v_2$} -- (1.1,1);
\draw (1.1,1)  node[left=0.3cm,above=0.1cm]{$\circled{2}$};

\draw[thick] (1.1,1) -- (5,4); 
\draw[thick,->] (2.5,2.1) -- (2.45,2.07); 
\draw[thick,->] (3.45,2.82) -- (3.5,2.86); 
\path [draw=black,thick,postaction={on each segment={mid arrow=black,thick}}]
(2.6,4) -- (1.1,1);

\draw[thick] (10,6.62) -- (2.6,4); 
\draw[thick,->] (6.05,5.22) -- (6,5.2); 

\path [draw=black,thick,postaction={on each segment={mid arrow=black,thick}}] 
(2.6,4) node[above]{$\circled{4}$} -- (5,4);
\draw[thick, dotted] (5,0) node[below]{$u_1$} -- (5,4);
\fill (5,4) circle (2pt);
\fill (2.6,4) circle (2pt);
\draw (5,4) node[right=0.5cm,above=-0.1cm]{$\circled{1}$};
\draw[thick, dotted] (0,4) node[left]{$v_1$}-- (5,4);

\end{tikzpicture}

\begin{tikzpicture}[>=stealth, scale=0.7]

\draw[->] (0,0) node[below]{$0$} node[above=0.3cm, right=0cm]{$\circled{1}$} -- (1,0) node[below=0.2cm, left] {$x$} coordinate (x axis);
\draw (0,0) -- (-6,0);
\draw[->] (0,0) -- (0,8) node[left] {$t$} coordinate (y axis);

\draw (0,0) -- (-6,6); 
\draw (-2,0) node[below]{$-1$} node[above=0.3cm, right=0.2cm]{$\circled{2}$} -- (-4,2); 
\draw (-4,0) node[below]{$-2$}
node[above=0.3cm, right=0.2cm]{$\circled{1}$}  -- (-4,2)
node[below=0.3cm, left=-0.1cm]{$\circled{4}$} ; 
\draw (-4,0) node[above=0.3cm, left=0.5cm]{$\circled{3}$}  -- (-5.3,2.5); 
\draw (-4,2) -- (-5.3,2.5); 
\draw (-5.3,2.5) -- (-6,6); 
\draw (-6,6) --(-6,7.5); 
\draw (-6,6) --(-6.5,7.5) node[above=0.3cm,right=-0.2cm]{$\circled{4}$}; 

\draw[thick,dotted] (0,2) node[right]{$t_1$}-- (-4,2);
\draw[thick,dotted] (0,2.5) node[right]{$t_2$}-- (-5.3,2.5);
\draw[thick,dotted] (0,6) node[right]{$t_3$}-- (-6,6);
\end{tikzpicture}
\end{tabular}
\caption{{Decay of a stop-and-go wave due to sparser vehicles upstream.}}
\label{f:SNGupdown}
\end{figure}
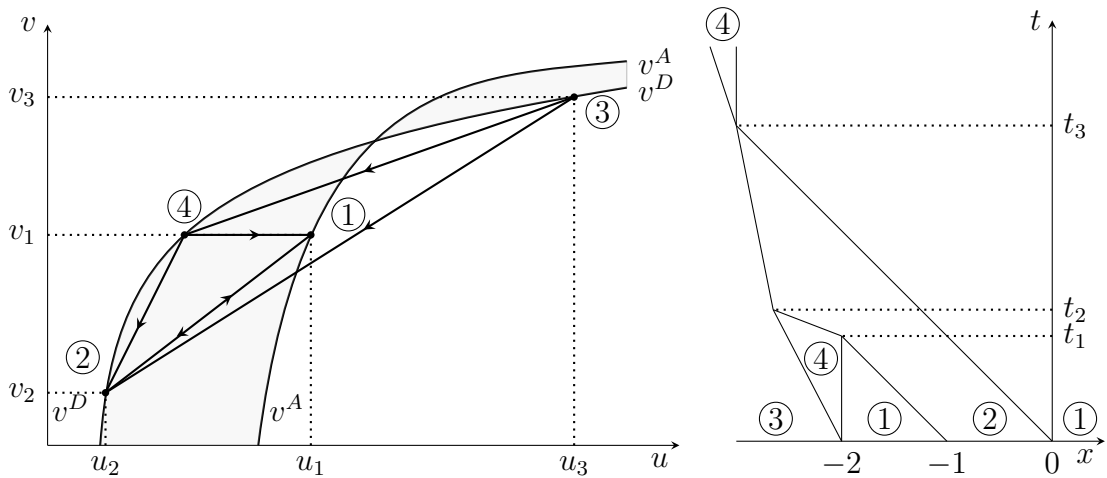


Another way for the stop-and-go waves, as is for any traffic jam, to disappear in time is that the {\em downstream} traffic travels faster. To confirm that the model \eqref{system2h} exhibits this phenomenon, we set the initial data as is shown in Figure \ref{f:SNGdisappearD} on the right. The downstream traffic is faster, with $v_3 > v_2$. Choose $v_3 = v^A(u_3)$ for simplicity. The solution depicted in Figure \ref{f:SNGdisappearD} shows that, in a finite time, the slow region with speed $v_2$ is eliminated and every car's speed will eventually increase to $v_3$.
\begin{figure}[htbp]
\begin{tabular}{cc}
\begin{tikzpicture}[>=stealth, scale=0.8]

\draw[->] (0,0) -- (8,0) node[below=0.2cm, left] {$u$} coordinate (x axis);
\draw[->] (0,0) -- (0,7) node[left] {$v$} coordinate (y axis);

\draw[thick] (1,0) node[above=0.5cm, left]{$v^D$} .. controls (1.5,3) and (3,5.5) .. (8,7) node[right]{$v^D$};

\draw[thick] (6.5,7) node[left]{$v^A$} .. controls (5.2,3)  and (5.5,5) .. (5,0) node[above=0.5cm, left]{$v^A$};

\filldraw[fill=gray!20!white, draw=black, nearly transparent] (1,0) .. controls (1.5,3) and (3,5.5) ..  (8,7) -- (6.5,7) .. controls (5.2,3) and (5.5,5) .. (5,0) -- cycle;

\draw[thick] (2.2,3.3)  .. controls (4,3.8) .. (5.5,4) node[right=0.3cm]{$h=h_1$}
node[left=0cm,above=-0.1cm]{$\circled{1}$};
\draw[thick,dotted] (5.5,0) node[below]{$u_1$} -- (5.5,4);
\draw[thick,dotted] (0,4) node[left]{$v_1$} -- (5.5,4);
\fill (5.5,4) circle (2pt);

\draw[thick] (1.42,1.7) node[left=0.2cm,above=0cm]{$\circled{2}$} .. controls (3.5,2.4) .. (5.25,2.7) node[right=0.5cm]{$h=h_2$};
\draw[thick,dotted] (1.42,0)  node[below]{$u_2$} -- (1.42,1.7);
\fill (1.42,1.7) circle (2pt);
\draw[thick,dotted] (0,1.7)  node[left]{$v_2$} -- (1.42,1.7) ;

\draw[thick] (3.35,4.6)  .. controls (5,5) .. (5.85,5.1) node[right]{$h=h_3$}
node[left=0.2cm,above=0cm]{$\circled{3}$};
\draw[thick,dotted] (5.85,0) node[below=0.3cm, right=-0.2cm]{$u_3$} -- (5.85,5.1);
\draw[thick,dotted] (0,5.1) node[left]{$v_3$} -- (5.85,5.1);
\fill (5.85,5.1) circle (2pt);
\draw[thick] (1.42,1.7) -- (5.5,4);
\draw[thick,->] (2.5,2.3) -- (2.45,2.27);
\draw[thick,->] (4,3.17) -- (4.05,3.20);
\draw[thick,->] (5.75,4.8) -- (5.8,4.9); 

\path [draw=black,thick,postaction={on each segment={mid arrow=black,thick}}]
(1.42,1.7) -- (5.85,5.1);
\end{tikzpicture}


\begin{tikzpicture}[>=stealth, scale=0.8]


\draw[->] (0,0) -- (2,0) node[below=0.2cm, left] {$x$} coordinate (x axis);
\draw (0,0) -- (-6,0);
\draw[->] (0,0) -- (0,6) node[left] {$t$} coordinate (y axis);

\draw (0,0)
node[left=0.7cm, above=1cm]{$\circled{3}$}
node[right=0.7cm, above=0.1cm]{$\circled{3}$}
node[left=1cm, above=0.1cm]{$\circled{2}$} --  (-3,3);
\draw (-2,0) node[left=0.7cm, above=0.1cm]{$\circled{1}$} --  (-3,3);
\draw (-3,3) -- (-5,4); 
\draw (-3,3) -- (-5,4.2);
\draw (-3,3) -- (-5,4.5);
\end{tikzpicture}

\end{tabular}
\caption{{The slow region in the middle of a stop-and-go wave disappears in time when the traffic  downstream is faster. 
}}
\label{f:SNGdisappearD}
\end{figure}
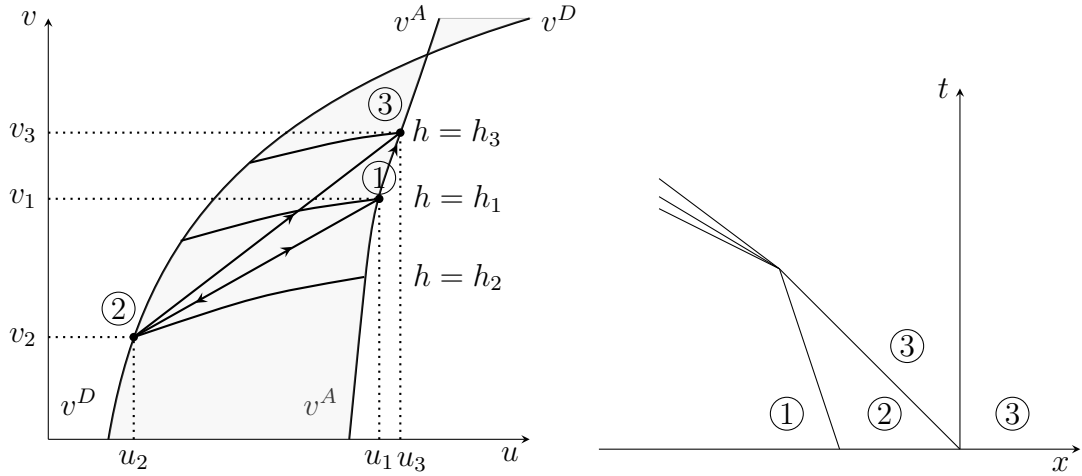


\end{example}

\begin{example}[Decay of phantom jam caused by small disturbance]\label{smalllPerturbation} Example \ref{ex:temp-bottleneck} shows that when a few cars in an otherwise uniform (in speed and spacing) platoon of cars deviate in speed by a sufficient margin, then stop-and-go patterns appear and persist as $t$ increases. In this example, we investigate what happens when the deviation is small. Notice that, differently from Example \ref{ex:temp-bottleneck}, cars never reach the acceleration and deceleration curves; the small disturbance is simulated by imposing that the traffic flow is strictly contained in the scanning zone.
	
	Assume $v_1<\bar v$ for definiteness; the arguments and results for the other case are similar.
	Consider the initial condition  given
	by \eqref{SnGoInitialData} with $v_1$ close enough to $\bar v$ so that
	$u_2> u^D(\bar h)$ and $u_3< u^A(h_1)$; see Figure \ref{f:platoon-dev} on the left. This requires $\circled{1} = (\bar u, \bar v)$ and
	$\circled{2} = (\bar u, v_1)$ to be away from the curves $v=v^A(u)$ and $v=v^D(u)$.
	

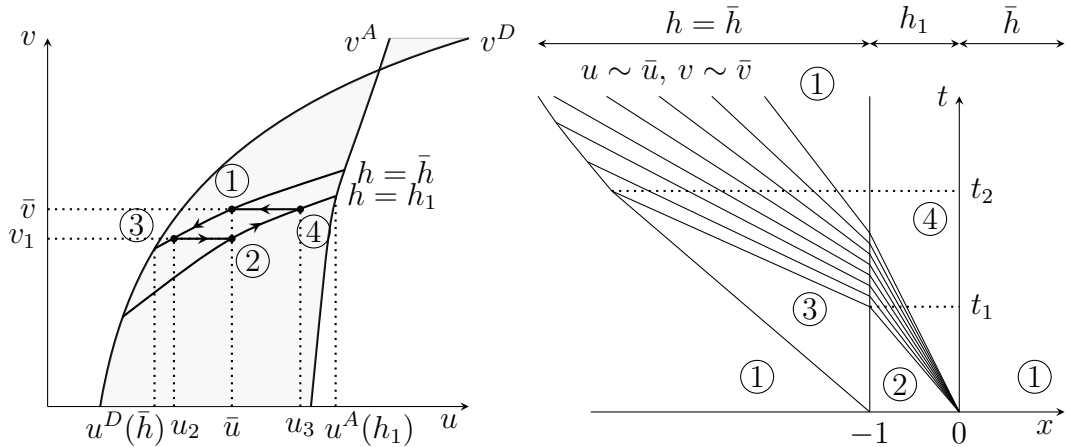
\begin{figure}[htbp]

\begin{tabular}{cc}

\begin{tikzpicture}[>=stealth, scale=0.70]


\draw[->] (0,0) -- (8,0) node[below=0.2cm, left] {$u$} coordinate (x axis);
\draw[->] (0,0) -- (0,7) node[left] {$v$} coordinate (y axis);

\draw[thick] (1,0) .. controls (1.5,3) and (3,5.5) .. (8,7) node[right]{$v^D$};

\draw[thick] (6.5,7) node[left]{$v^A$} .. controls (5.2,3)  and (5.5,5) .. (5,0);

\filldraw[fill=gray!20!white, draw=black, nearly transparent] (1,0) .. controls (1.5,3) and (3,5.5) ..  (8,7) -- (6.5,7) .. controls (5.2,3) and (5.5,5) .. (5,0) -- cycle;

\draw[thick] (2.03,3)  .. controls (3.5,3.8) .. (5.65,4.5) node[right]{$h=\bar h$};
\draw[thick, dotted] (2.03,0) node[below=0.3cm,left=-0.3cm]{$u^D(\bar h)$}-- (2.03,3);
\draw[thick,->] (2.8,3.42) -- (2.75,3.39); 
\path [draw=black,thick,postaction={on each segment={mid arrow=black,thick}}]
(4.8,3.75) node[below=0.3cm,right=-0.2cm]{${\circled{4}}$} -- (3.5,3.75) node[above]{${\circled{1}}$};
\fill (4.8,3.75) circle(2pt);
\draw[thick, dotted] (2.4,0) node[below=0.3cm,right=-0.2cm]{$u_2$} -- (2.4,3.19); 
\fill (2.4,3.19) circle(2pt);
\draw[thick,->] (4,3.46) -- (4.05,3.49); 
\draw[thick, dotted] (4.8,0) node[below]{$u_3$}-- (4.8,3.75); 

\draw[thick] (1.42,1.7)  .. controls (3.5,3.3) .. (5.47,4) node[right]{$h=h_1$};
\draw[thick, dotted] (5.47,0) node[below=0.3cm,right=-0.3cm]{$u^A(h_1)$}-- (5.47,4);
\path [draw=black,thick,postaction={on each segment={mid arrow=black,thick}}]
(2.4,3.19) node[left=0.5cm,above=-0.2cm]{$\circled{3}$} -- (3.5,3.19) node[below=0.3cm,right=-0.1cm]{${\circled{2}}$};

\fill (3.5,3.75) circle (2pt); 
\fill (3.5,3.19) circle (2pt); 
\draw[thick, dotted] (3.5,0) node[below]{$\bar u$}-- (3.5,3.7);
\draw[thick, dotted] (0,3.75) node[left]{$\bar v$}-- (3.5,3.75);
\draw[thick, dotted] (0,3.19) node[left]{$v_1$}-- (3.5,3.19);

\end{tikzpicture}


\begin{tikzpicture}[>=stealth, scale=0.70]


\draw[->] (0,0) -- (2,0) node[below=0.2cm, left] {$x$} coordinate (x axis);
\draw (0,0) -- (-7,0);
\draw[->] (0,0) node[below]{$0$} node[right=1cm,above=0.1cm]{$\circled{1}$} -- (0,6)
node[left] {$t$} coordinate (y axis);

\draw (-1.7,0)
node[left=1.5cm, above=0.1cm]{$\circled{1}$}
node[left=0.7cm, above=4cm]{$\circled{1}$}
node[left=2.7cm, above=4.2cm]{$u\sim\bar u,\, v\sim\bar v$}
node[left=0.8cm, above=1cm]{$\circled{3}$}
--  (-6.6,4.2); 
\draw (-6.6,4.2) .. controls (-7.6,5.4) .. (-8,6); 

\draw (-1.7,0)
node[below]{$-1$}
node[right=0.4cm, above=0cm]{$\circled{2}$}
node[right=0.8cm, above=2.2cm]{$\circled{4}$}
-- (-1.7,6);

\draw (0,0) -- (-1.7,2) ;
\draw (0,0) -- (-1.7,2.2) ;
\draw (0,0) -- (-1.7,2.4) ;
\draw (0,0) -- (-1.7,2.6) ;
\draw (0,0) -- (-1.7,2.8) ;
\draw (0,0) -- (-1.7,3);
\draw (0,0) -- (-1.7,3.2);
\draw (0,0) -- (-1.7,3.4) ;

\draw (-1.7,2) -- (-6.6,4.2);
\draw (-1.7,2.2) -- (-7.05,4.75);
\draw (-1.7,2.4) -- (-7.65,5.5);
\draw (-1.7,2.6) -- (-7.7,6);
\draw (-1.7,2.8) -- (-6.7,6);
\draw (-1.7,3) -- (-5.7,6);
\draw (-1.7,3.2) -- (-4.7,6);
\draw (-1.7,3.4) -- (-3.7,6) ;

\draw[thick, dotted] (-1.7,2) -- (0,2) node[right]{$t_1$};
\draw[thick, dotted] (-6.6,4.2) -- (0,4.2) node[right]{$t_2$};

\draw[<->] (-1.7,7) -- node[midway, above]{$h=\bar h$} (-8,7);
\draw[<->] (0,7) -- node[midway, above]{$h_1$} (-1.7,7);
\draw[<->] (0,7) -- node[midway, above]{$\bar h$} (2,7);
\end{tikzpicture}
\end{tabular}

\caption{{The disturbance of speed by a few cars slightly slower in an otherwise uniform platoon of cars will decay, but the sparser region created in front of these few cars is persistent. Left: in the $(u, v)$-plane; right: in the $(x, t)$-plane.}}
\label{f:platoon-dev}
\end{figure}

	The solution constructed using the Riemann solvers listed in Case 2 in Section \ref{s:Riemann-solvers}
	is illustrated in Figure \ref{f:platoon-dev} on the right.
	The interaction of the scanning rarefaction wave
	$\circled{2} \xrightarrow{\text{ScR}} \circled{4}$
	and the stationary shock
	$\circled{3} \xrightarrow{\text{ST}} \circled{2}$
	shifts the rarefaction wave along the $h=h_1$ scanning curve to that of $h=\bar h$.
	Later, this rarefaction wave with $h=\bar h$ will interact with the scanning shock of the same $\bar h$ and will eliminate the slow region with state $\circled{3}$.	This interaction is due to the nonlinearity of the scanning curve. Further interaction of the rarefaction and the shock wave will make cars' states in the interaction zone go to $\circled{1}$ at the rate of $O(1)t^{-1}$.
	
	It is interesting to see that the final state of the car train has spacing almost equal to $\bar u$ except for cars in $-1<x<0$, where it is $u_3$ and wider than $\bar u$.
	This is intuitive in the sense that cars in $-1<x<0$, which are initially slower than cars in $x>0$, will have more space in front because it takes time for them to accelerate to $\bar v$, and stay at that speed afterwards.
	The cars initially behind $x=-1$ first have to slow down to speed $v_1$, and later accelerate back to $\bar v$. Since they change speed along the same scanning curve, their spacing also return to the original $\bar u$. This phenomenon is disallowed in LWR model because it specifies $v=v(u)$, making different spacing for the same speed impossible.
	
	Similarly, if the few cars in the otherwise uniform car train in congested region are faster instead, then the disturbance in speed will also decay, but the spacing for those faster cars at $t=0$ will be more compact while the spacing for other cars remains almost the same after some time.
\end{example}

\begin{remark}
	The last examples suggest that if a few cars in an otherwise uniform car train brake sufficiently relative to slopes of the scanning curves nearby, then a stop-and-go phantom jam is created and the jam will persist.
	On the other hand, if the slowing down or speeding up is minor, then the speed oscillation in the car platoon will decay in time, if the initial data are strictly in scanning region.
	When the deviation from the uniform speed is up, then the can train will become denser for those cars overspeeding at $t=0$.
	Else, the spacing for these cars become larger.
	
	As a conclusion, if drivers are more likely to overspeed a little than underspeed in an otherwise uniform car train, then the car train will become more and more compact, until the state $(u, v)$ reaches the curve $v=v^D(u)$ at which overspeed is no longer safe without a steep deceleration afterwards.
\end{remark}

\begin{example}[Effect of a few cars underspeed in an otherwise uniform car train traveling at $v=v^D(u)$ is persistent]\label{exa:underspeed}
	This time, we change the initial data \eqref{SnGoInitialData} to
	\begin{equation}\label{SnGoInitialData3}
	(u,v)(x,0)=\left\{
	\begin{array}{ll}
	(\bar u, \bar v) = \circled{1} & \hbox{ if }x<-1,
	\\
	(u_1, v_1) = \circled{2} & \hbox{ if } -1\le x \le 0,
	\\
	(\bar u, \bar v)= \circled{1}  & \hbox{ if } x>0,
	\end{array}
	\right.
	\end{equation}
	where both $(\bar u, \bar v)$ and $(u_1, v_1)$, with $v_1<\bar v$, are on $v=v^D(u)$; see Figure \ref{f:fewunderD}. With respect to the initial data \eqref{SnGoInitialData}, we changed $(\bar u, v_1)$ to $(u_1, v_1)$ so as to compare the behavior of solutions of model \eqref{system2h} and that of the classical LWR model with $v= v^D(u$). For such initial data, LWR model's solution will eliminate the $(u_1, v_1)$ region in a finite time: the jumps at $x=-1$ and $x=0$ are solved by a shock wave from $\circled{1}$ to $\circled{2}$ and by a rarefaction wave $\circled{2}$ to $\circled{1}$, which necessarily interact and then annihilate.
	
	The solution of \eqref{system2h} with initial data \eqref{SnGoInitialData3} is depicted in Figure \ref{f:fewunderD}. Notice that in our model the shock from $x=-1$ and the rarefaction from $x=0$ {\em do not interact}, differently than in the case of the LWR model. This is due to the fact the constitutive law speeds are different in the two cases. The solution has a persistent slower (with car speed $v_1$) and expanding region moving upstream. This conclusion stays the same even if we increase $u_1$ towards $\bar u$. The expansion speed of the slower region is roughly proportional to the difference of the average slope of $v=v^D(u)$ and $v=v^S(u, h)$ near the point $(u_1, v_1)$.
	
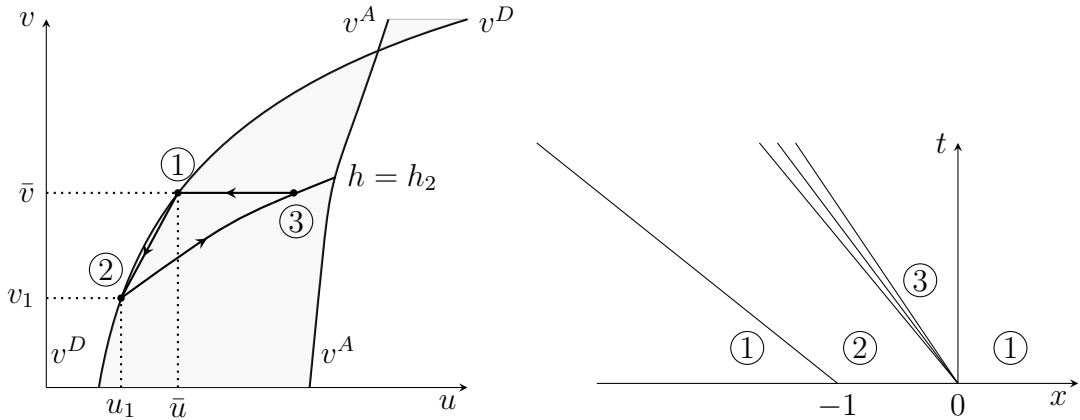
\begin{figure}[htbp]
\begin{tabular}{cc}
\begin{tikzpicture}[>=stealth, scale=0.7]

\draw[->] (0,0) -- (8,0) node[below=0.2cm, left] {$u$} coordinate (x axis);
\draw[->] (0,0) -- (0,7) node[left] {$v$} coordinate (y axis);

\draw[thick] (1,0) node[above=0.5cm, left]{$v^D$} .. controls (1.5,3) and (3,5.5) .. (8,7) node[right]{$v^D$};

\draw[thick] (6.5,7) node[left]{$v^A$} .. controls (5.2,3)  and (5.5,5) .. (5,0) node[above=0.5cm, right]{$v^A$};

\filldraw[fill=gray!20!white, draw=black, nearly transparent] (1,0) .. controls (1.5,3) and (3,5.5) ..  (8,7) -- (6.5,7) .. controls (5.2,3) and (5.5,5) .. (5,0) -- cycle;

\draw[thick] (1.42,1.7) node[left=0.2cm,above=0cm]{$\circled{2}$} .. controls (3.5,3.2) .. (5.5,4) node[right]{$h=h_2$};
\draw[thick,dotted] (1.42,0)  node[below]{$u_1$} -- (1.42,1.7) ;
\draw[thick,dotted] (0,1.7)  node[left]{$v_1$} -- (1.42,1.7) ;
\fill (1.42,1.7) circle(2pt);
\fill (4.7,3.7) circle(2pt);
\path [draw=black,thick,postaction={on each segment={mid arrow=black,thick}}]
(4.75,3.7) node[below]{$\circled{3}$} -- (2.5,3.7) node[above]{$\circled{1}$};

\path [draw=black,thick,postaction={on each segment={mid arrow=black,thick}}]
(2.5,3.7)--(1.42,1.7);

\draw[thick, dotted] (2.5,0) node[below]{$\bar u$}-- (2.5,3.7);
\fill (2.5,3.7) circle(2pt);
\draw[thick, dotted] (0,3.7) node[left]{$\bar v$} -- (2.5,3.7);

\draw[thick,->] (3,2.8) -- (3.05,2.83);

\end{tikzpicture}


\begin{tikzpicture}[>=stealth, scale=0.8]


\draw[->] (0,0) node[below]{$0$} -- (2,0) node[below=0.2cm, left] {$x$} coordinate (x axis);
\draw (0,0) -- (-6,0);
\draw[->] (0,0) -- (0,4) node[left] {$t$} coordinate (y axis);

\draw (0,0)
node[right=0.7cm, above=0.1cm]{$\circled{1}$}
node[left=1.3cm, above=0.1cm]{$\circled{2}$}
node[left=0.5cm, above=1cm]{$\circled{3}$}
--  (-3,4);
\draw (0,0) --  (-3.3,4);
\draw (0,0) --  (-2.7,4);

\draw (-2,0) node[below]{$-1$} node[left=1.2cm, above=0.1cm]{$\circled{1}$} --  (-7,4);

\end{tikzpicture}

\end{tabular}
\caption{{A few cars underspeed in an otherwise uniform car train traveling at $v=v^D(u)$ can create a persistent slower region.}}
\label{f:fewunderD}
\end{figure}

\end{example}

\begin{example}[Over-braking Riemann solvers \eqref{RSolver2} can create phantom jams]
	Consider the same initial data \eqref{SnGoInitialData}
	as in Examples \ref{ex:temp-bottleneck} and \ref{smalllPerturbation}, where $\circled{1}=(\bar u, \bar v)$ and $\circled{2} =(\bar u, v_1)$ are in the scanning region and
	$0< \bar v-v_1<<1.$
	Example \ref{smalllPerturbation} showed that the speed of all cars will equal $\bar v$ in finite time.
	The analysis used the Riemann solver \eqref{RSolver1}. On the other hand, we know from Section \ref{s:Riemann-solvers} that there are indefinitely many other Riemann solvers of the form
	\eqref{RSolver2} solving the same Riemann problem.
	The Riemann solver \eqref{RSolver2} is for the situation where a driver over-brakes, so we call \eqref{RSolver2} over-braking Riemann solver. An extreme case is when $\circled{1} = \circled{2}$; in this case \eqref{RSolver2} is still a solution besides the usual constant solution.
	
	What if \eqref{RSolver2} is used in Example \ref{smalllPerturbation}, instead of \eqref{RSolver1}?
	
	The initial value problem with data  \eqref{SnGoInitialData} is solved
	as shown in Figure \ref{f:DFR} on the right. 	

\begin{figure}[htbp]
\begin{tabular}{cc}

\begin{tikzpicture}[>=stealth, scale=0.65]


\draw[->] (0,0) -- (8,0) node[below=0.2cm, left] {$u$} coordinate (x axis);
\draw[->] (0,0) -- (0,7) node[left] {$v$} coordinate (y axis);

\draw[thick] (1,0) .. controls (1.5,4)  .. (3,7) node[left]{$v^D$};

\draw[thick] (8,7) node[right]{$v^A$} .. controls (7,5) .. (6,0);

\filldraw[fill=gray!20!white, draw=black, nearly transparent] (1,0) .. controls (1.5,4) ..  (3,7) -- (8,7) .. controls (7,5) .. (6,0) -- cycle;
%

\draw[thick] (1.43,3)  .. controls (3.5,4.9) .. (7.3,5.5) node[right]{$h=h_1$};
\draw[thick,dotted] (3,0) node[below]{$\bar u$}-- (3,4.3);
\fill (3,4.3) circle (2pt) node[above]{$\circled{1}$};
\draw[thick,dotted] (0,4.3) node[left]{$\bar v$}-- (3,4.3);
\draw[thick,dotted] (0,3.15) node[left]{$v_1$}-- (3,3.15);
\fill (3,3.15) circle (2pt) node[below=0.3cm,right=-0.1cm]{$\circled{2}$};

\draw[thick] (1.25,2)  .. controls (3.45,3.5) .. (6.95,4.5);
\draw[thin,->] (8,5) node[right]{$h=h_2$} -- (7.1,4.65);
\path [draw=black,thick,postaction={on each segment={mid arrow=black,thick}}]
(3,4.3) -- (1.13,1)  -- (6.65,3.15)  -- (3,3.15);
\fill (1.13,1) circle (2pt) node[left]{$\circled{3}$};
\fill (6.65,3.15) circle (2pt) node[right]{$\circled{4}$};

\path [draw=black,thick,postaction={on each segment={mid arrow=black,thick}}]
(6.25,4.3) -- (3,4.3);
\fill (6.25,4.3) circle (2pt) node[above]{$\circled{5}$};
\fill (6.92,4.3) circle (2pt) node[right]{$\circled{6}$};
\draw[thick,->] (4.5,3.78) -- (4.55,3.8);
\draw[thick, dotted] (6.25,4.3) -- (6.92,4.3);

\end{tikzpicture}


\begin{tikzpicture}[>=stealth, scale=0.65]


\draw[->] (0,0) -- (2,0) node[below=0.2cm, left] {$x$} coordinate (x axis);
\draw (0,0) -- (-7,0);
\draw[->] (0,0)
node[below]{$0$}
node[right=1cm,above=0.1cm]{$\circled{1}$}
-- (0,6)
node[right] {$t$} coordinate (y axis);

\draw (-2,0) -- (-8,3);
\draw (-2,0)
node[left=2.5cm, above=0.1cm]{$\circled{1}$}
node[left=1.5cm, above=3cm]{$\circled{6}$}
node[left=0.8cm, above=0.8cm]{$\circled{4}$}
--  (-5,2.5); 
\draw (-5,2.5) .. controls (-6.5,4) .. (-8,5); 

\draw (-2,0)
node[below]{$-1$}
node[right=0.4cm, above=0.1cm]{$\circled{2}$}
node[right=0.7cm, above=1.7cm]{$\circled{5}$}
-- (-2,6);

\draw (0,0) -- (-2,2);
\draw (0,0) -- (-2,2.2);
\draw (0,0) -- (-2,2.4) ;

\draw (-2,2) -- (-5,2.5);
\draw (-2,2.2) -- (-5.7,3.2);
\draw (-2,2.4) -- (-6.45,3.9) node[below=0.3cm]{$\circled{3}$};

\draw[thick, dotted] (-2,2) -- (0,2) node[right]{$t_1$};
\draw[thick, dotted] (-6.45,3.9) -- (0,3.9) node[right]{$t_2$};

\draw[<->] (-2,6.5) -- node[midway, above]{$h=\bar h$} (-8,6.5);
\draw[<->] (0,6.5) -- node[midway, above]{$h=h_1$} (-2,6.5);
\draw[<->] (0,6.5) -- node[midway, above]{$h=\bar h$} (2,6.5);

\end{tikzpicture}
\end{tabular}
\caption{{Example \ref{smalllPerturbation} re-done using over-braking in the Riemann solver \eqref{RSolver2}.}}
\label{f:DFR}
\end{figure}
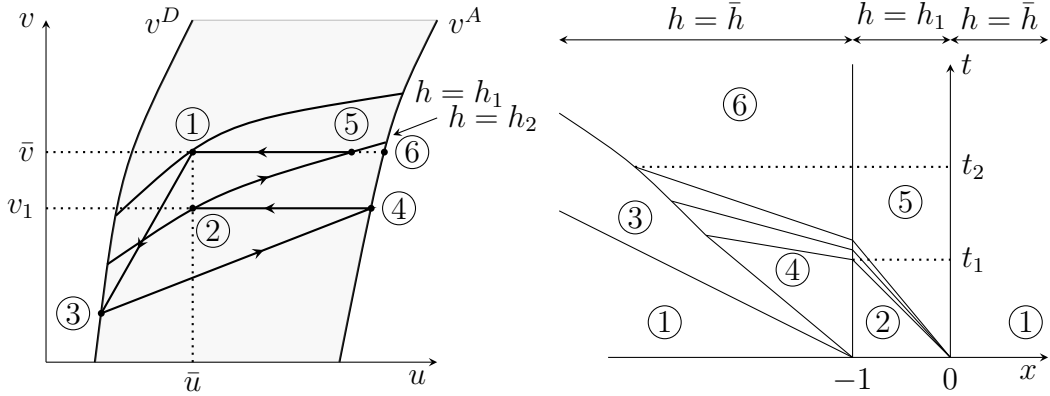

The Riemann solver used for the Riemann problem at $x=-1$ is of the type \eqref{RSolver2}. Notice that the two shock waves on the left do not interact. After waves finish their interactions in a finite time, the car platoon exhibits a phantom jam region $\circled{3}$ moving backwards along the car platoon, while the rest of cars travel at the higher speed $v_1$.
	
	On a closed loop road, similar reasonings as in Example \ref{circularTrack}
	show that the solution for $t$ large enough has only two regions in which cars' speeds are
	slow ($v_3$) and fast ($v_6$), respectively. These two regions have stationary length and are moving upstream at the speed of the shock
	$\circled{3} \xrightarrow{\text{ScAS}} \circled{6}$, and this wave pattern persist afterwards.
	This is again the stop-and-go wave pattern observed in
	\cite{Sugiyama2004}.
	
	Notice that the nonuniqueness of Riemann solvers may lead to instability. This happens, for example, in the case where $\circled{1} = \circled{2}$. Suppose the initial data at $t = 0 +o(1) >0$ is  \eqref{RSolver2}, which can be considered as a perturbation of a constant initial data, since it is almost constant except in a very small region. One or two misbehaving cars can create such a perturbation. From a mathematical point of view, whether such a perturbation is considered small depends on the norm used. Nevertheless, a solution for this perturbed initial data is \eqref{RSolver2}, leading to stop-and-go pattern, which is far from the constant solution for the constant initial data.
\end{example}

\begin{remark}\label{r:mech4jam2form}
	The above examples reveal a possible mechanism to produce jams through small oscillations in speed in a car platoon traveling in congested zone $v<v_c$. Start at $t=0$ from a uniform car train with constant spacing and speed $(\bar u, \bar v)$ in the scanning zone. Later, some drivers in the platoon drive temporarily and slightly above (or below) $\bar v$: they will make their spacing more (or less) compact. If more drivers drive over the speed $\bar v$, then the car platoon become denser while the overall platoon speed is still $\bar v$, since small speed variations decay as time increases as long as the corresponding states $(u, v)$ are still inside the scanning zone.
	Once the spacing $u$ decreases to $u = (v^D)^{-1}( \bar v)$, over-speeding becomes impossible while under-speeding creates a persistent expanding slower car region upstream. Later, the same mechanism can repeat itself in the slower car region to create an even slower car region that is expanding upstream, causing jam eventually, unless upstream and/or downstream traffic spacing increase sufficiently.

\end{remark}

\section{Conclusions and Discussions}\label{s:conclusions}

In this paper we proposed a new macroscopic model for vehicular traffic flows with hysteresis. It is an extension of LWR model to include hysteretic drivers' behavior,  where  hysteresis loops used are based on the shapes observed experimentally \cite{CLS, SZHW, Treiterer-Myers, Yeo-Skabardonis}. The hysteresis mood parameter for each driver is updated by a governing differential equation.

The new model not only covers the successful part of LWR type of models,
but also produces
wave patterns which are missing in traditional continuum macroscopic traffic models but are observed in real traffic flows.
We showed that the model has solutions corresponding to stop-and-go patterns, car platoons with uniform speed but possibly uneven spacing, stationary shocks, and acceleration shocks. 
All these waves are missing in 
the LWR model. Therefore, these wave patterns are attributable to hysteresis, because hysteresis is the only new factor in the model \eqref{system2h} comparing with the LWR model.
Acceleration shocks have not been pointed out before, but were present in many observations on real traffic flows such as in Fig. 3 of \cite{Kaufmann_et_al2017}.

Examples of initial value problems of the model \eqref{system2h} are studied to gain insights into the formation, growth and decay of traffic jams. 
Through these examples, we gain the following insights: A stop-and-go pattern can form by a few cars under-speeding sufficiently in an otherwise uniform car train,  traveling in either an open lane, or a circular closed loop road. Assume that cars in a platoon traveling in congested zone are more likely to over-speeding than under-speeding with respect to the speed of the leading car of the platoon. Then, at first, the platoon will become more compact while keeping up with the speed $v_+$ of the leading car. Later, when the spacing is reduced to the point $u= (v^D)^{-1}(v_+)$, some under-speeding cars will cause a persistent slower and denser segment moving upstream. If this mechanism repeats itself in the slower and denser segment, traffic jam emerges upstream.
Some examples, studied in this paper, confirm our intuition that a {\em faster moving} downstream flow can eliminate a phantom jam, and that a sufficiently {\em sparse} upstream flow can also eliminate a phantom jam. A small perturbation of the speeds of a few cars in an otherwise uniform car train in congested zone will decay, but the effects on spacing on these cars is persistent. 
Riemann solvers for \eqref{system2h} are often not unique. Although Riemann solvers for ``rational" drivers are given in \S5, there is nothing to stop a driver to over or under speed, which leads to other Riemann solvers to appear in solutions. Some of these ``irrational" Riemann solvers will cause persistent slower traffic upstream.

The above results show that the model \eqref{system2h} and its version in Eulerian coordinates
\eqref{systemInviscidh} are worth further investigation.  Indeed, there are many more problems about \eqref{system2h} or \eqref{systemInviscidh} waiting to be studied. As examples, we raise a few questions here.

\begin{enumerate}[{\em (i)}]

\item One problem to attack is to extend our study from \eqref{system2h} to the system in Eulerian coordinate \eqref{systemInviscidh}. 
Most traffic observations reported in the literature are about traffic flows near fixed bottlenecks. Thus, investigating this problem is a necessary step to take before a comparison of the model's behavior with these observations can be done. 

\item Another question is whether small random speed fluctuation, biased in over-speeding than the leading car, can either cause traffic jam, as suggested in Remark \ref{r:mech4jam2form}, or lead to stop-and-go patterns in a ring road.

\item One more problem to study is the effect of the shape of hysteretic fundamental diagram on traffic patterns and throughput. What if the shape of scanning curves are not concave? What if the scanning curves for acceleration and deceleration are different? From an experimental point of view, it is desirable to have more detailed information about the scanning curves.
\end{enumerate}

\section*{Acknowledgment}
The first author is a member of GNAMPA and acknowledges support from this institution.
The second author would like to thank
NVIDIA Academic GPU Grant for contributing a Titan GPU
enabling him to do massive parallel computing on the GPU.

{\small
\bibliography{refe_th}
\bibliographystyle{abbrv}
}

\end{document}